\documentclass{article}
\usepackage[utf8]{inputenc}

\usepackage[a4paper, margin=2.5cm]{geometry}
\usepackage[english]{babel}

\usepackage{url}
\usepackage{verbatim}
\usepackage{listings}
\usepackage{amsmath}
\usepackage{amssymb}
\usepackage{psfrag}
\usepackage{multirow}
\usepackage{amsthm}
\usepackage{makecell}
\usepackage{stmaryrd}
\usepackage{csquotes}
\usepackage{graphicx}
\usepackage{import}
\usepackage{stmaryrd}
\usepackage{xcolor}
\usepackage{pgfplots}
\usepackage{tikz}
\usepgfplotslibrary{groupplots}
\pgfplotsset{compat=1.17}

\usepackage{mathtools}

\newcommand{\N}{\mathbb{N}}
\newcommand{\R}{\mathbb{R}}
\newcommand{\Pol}{\mathbb{P}}

\def\width{\beta}

\newcommand{\mbe}{\mathbb{E}}
\newcommand{\mct}{\mathcal{T}}
\newcommand{\mce}{\mathcal{E}}

\newcommand\restr[2]{\ensuremath{\left.#1\right|_{#2}}}

\def \d {{\rm d}}
\def\dx{\,\d x }

\def\ds{\,\d s }
\def \IR {\mathbb{R}}

\theoremstyle{plain}
\newtheorem{definition}{Definition}[section]
\newtheorem{lemma}[definition]{Lemma}
\newtheorem{theorem}[definition]{Theorem}
\theoremstyle{remark}
\newtheorem{remark}[definition]{Remark}

\makeatletter
\renewcommand*\env@matrix[1][*\c@MaxMatrixCols c]{%
  \hskip -\arraycolsep
  \let\@ifnextchar\new@ifnextchar
  \array{#1}}
\makeatother

\def\env@matrix{\hskip -\arraycolsep
  \let\@ifnextchar\new@ifnextchar
  \array{*\c@MaxMatrixCols c}}

\usepackage{caption}

\date{\today}

\usepackage{graphicx}
\usepackage[shortlabels]{enumitem}

\usepackage{ifthen}

\newboolean{shortintro}
\setboolean{shortintro}{true} 
\newcommand{\IntroChoice}[2]{%
  \ifthenelse{\boolean{shortintro}}{#2}{#1}%
}

\title{DG = FEM + flat elements, Part I: Diffusion}
\author{
    Ji\v{r}\'{i} Szotkowski \thanks{Faculty of Mathematics and Physics, Charles University, Sokolovsk\'{a} 83, 186 75 Praha 8, Czech Republic. Corresponding author kucera@karlin.mff.cuni.cz}
    \and V\'{a}clav Ku\v{c}era \footnotemark[1]
    \and Chi-Wang Shu \thanks{Division of Applied Mathematics, Brown University, Providence, RI 02912, USA}
    \and Antoine Quiriny \thanks{Institute of Mechanics, Materials and Civil Engineering (iMMC), Avenue Georges Lema\^{i}tre 4, 1348 Louvain-la-Neuve, Belgium}
    \and Jonathan Lambrechts\footnotemark[3]
    \and Nicolas Mo\"{e}s\footnotemark[3]
    \and Jean-Fran\c{c}ois Remacle\footnotemark[3]
    \thanks{This project has received funding from the European Research Council (ERC) under the European Union’s Horizon research and innovation program (Grant agreement No. 101 071 255). The Research of Chi-Wang Shu is supported in part by NSF grant DMS-2309249. This work was carried out in part during a visit by V\'{a}clav Ku\v{c}era to the Division of Applied Mathematics at Brown University, whose hospitality and financial support are gratefully acknowledged.}
}

\begin{document}

\maketitle

\newcommand{\BibTeX}{{\scshape Bib}\TeX\xspace}

\begin{abstract}
We establish a simple, rigorous, and easy to implement connection between the classical continuous finite element method (FEM) and the discontinuous Galerkin (DG) method for Poisson's problem. The key idea is to insert a vanishing-thickness layer of `dummy' elements along cell interfaces. By modifying the diffusion coefficient on these elements to be proportional to their thickness, we prove the FEM formulation converges to Babu\v{s}ka-Zl\'{a}mal DG with trapezoidal edge quadrature. The scheme is trivial to implement by (i) a mesh edit that introduces degenerate interface elements and (ii) a single Jacobian threshold in an otherwise unmodified FEM code to handle the degenerate elements via the tempered finite element (TFEM) framework. We provide a rigorous derivation of the resulting TFEM-DG scheme, prove optimal $H^1$ and $L^2$ error estimates, and present numerical experiments in 2D and 3D. The method allows for simple implementation of DG in a FEM code and even adaptive element-by-element switching between FEM and DG with minimal coding effort. The framework is readily extensible, as we will demonstrate in a companion paper dedicated to evolutionary nonlinear first‑order hyperbolic systems.
\end{abstract}

\paragraph*{Keywords:}
FEM, DG, TFEM, flat elements, diffusion

\paragraph*{MSCcodes:}
65N30, 65N12, 35J05

\IntroChoice{
\section{Introduction}
The \emph{finite element method} (FEM) and \emph{discontinuous Galerkin} (DG) method are among the most popular methods for the numerical solution of partial differential equations. Each of these methods has its own set of problems for which it is most suited -- classical continuous FEM is typically used for diffusive or elliptic problems, while DG, with its discontinuous approximations, is more suited for nonlinear first‐order hyperbolic conservation laws. Of course, DG can also be applied, e.g., to Poisson's problem, and FEM can be applied to hyperbolic problems as well. However, these applications bring their own challenges: there is an entire `zoo' of DG formulations for Poisson's problem, each of which might or might not be symmetric, consistent, dual consistent, or even elliptic, cf. \cite{arnold-et-al-unified-analysis-of-dg-for-eliptic-problems}. On the other hand, FEM for hyperbolic conservation laws typically requires some global stabilization strategy akin to streamline diffusion, unlike DG, where the stabilization mechanism is contained in the interface numerical fluxes, possibly with locally applied limiters (see, e.g., overviews in \cite{Shu2013BriefSurvey,ChenShuEntropyStableReview}).

Implementation of the two methods is also fundamentally different: FEM (i.e., continuous Galerkin) uses globally continuous spaces with shared global degrees of freedom and assembles element volume integrals without explicitly using interior face terms. On the other hand, DG uses element-local degrees of freedom and, in addition to element integrals, requires face/interface assembly to evaluate numerical fluxes via traces, jumps, and averages, \cite{ReedHill1973, CockburnShu1998_V, Shu2013BriefSurvey}. For these reasons, combining the two methods into a monolithic code (for example, with FEM in parts of the computational domain and DG in the rest) is not a trivial task. However, such combined codes are useful: consider a fluid-structure interaction problem with the equations of elasticity in $\Omega_1$ and compressible Euler equations in $\Omega_2$. While FEM is suitable and more efficient for the first subproblem, DG would be preferred in the latter case \cite{Ciarlet,Shu2013BriefSurvey}.

The purpose of this paper is to present a simple, rigorous, and easy‑to‑implement framework connecting DG to FEM. Namely, we show how the DG method can be obtained from FEM by inserting a layer of vanishing‑thickness elements at cell interfaces in the mesh and by modifying the diffusion coefficient on these `dummy' elements. We show that the resulting continuous finite element formulation corresponds to a DG scheme. In the present paper, we do so for elliptic problems (demonstrated on Poisson's problem), where FEM with degenerate elements leads to the Babu\v{s}ka-Zl\'{a}mal DG scheme with trapezoidal quadrature on cell interfaces \cite{Babuska-Zlamal, Babuska:1973:FEM_penalty, arnold-et-al-unified-analysis-of-dg-for-eliptic-problems}. In a followup companion paper, we discuss the same ideas for nonlinear first‑order hyperbolic equations, where the degenerate elements naturally give rise to an interface numerical flux. The presented technique allows for a rigorous derivation of the DG scheme as a limit of finite elements, when the thickness $\beta$ of the vanishing‑thickness elements goes to zero. Furthermore, the resulting scheme can be analyzed by standard techniques (optimal $H^1$ and $L^2$ error estimates).

The implementation of the presented technique is trivial within any FEM code. We show how the DG scheme can be equivalently obtained by simple thresholding of (numerically) zero Jacobians $J$ of the flat degenerate interface elements by a nonzero threshold $J_{\min}$ when evaluating element integrals, gradients of basis functions, etc. This modification can be implemented by essentially one line of code in any off‑the‑shelf FEM implementation. Practically, the DG scheme is therefore implemented in a FEM code by:
\begin{enumerate}
    \item a problem‑independent mesh edit that introduces flat zero‑measure elements at cell interfaces,
    \item inserting a simple threshold on (near‑)zero Jacobians into the otherwise unmodified FEM code.
\end{enumerate}
This mechanism (thresholding of near‑zero Jacobians) was introduced in the context of the \emph{Tempered Finite Element Method} (TFEM), \cite{quiriny2024temperedfiniteelementmethod}, as a means to compute accurate FEM solutions on meshes with degenerate elements. It was also demonstrated that zero‑thickness elements along a codimension‑1 curve or surface, with the aforementioned thresholding, correspond to a simple jump‑penalization mortaring scheme. Here we essentially extend the approach to introduce jump penalties at every cell interface. For this reason we will refer to the resulting DG scheme as \emph{TFEM‑DG}. As we will demonstrate (Remark~\ref{rem:WOPSIP}), the TFEM-DG scheme is very close to the WOPSIP (weakly over-penalized symmetric interior penalty) DG method of \cite{WOPSIP}, as both can be viewed as simple quadrature versions of the basic Babu\v{s}ka-Zl\'{a}mal DG scheme and both have the same requirements on the penalization parameter.

We note that the use of degenerate elements in finite element methods is not new. There is a large body of work, so‑called \emph{cohesive elements} or \emph{zero‑thickness interface elements}, which models internal discontinuities by inserting vanishing‑thickness interface elements whose two sides carry independent traces of the primary field and are linked by a traction-separation law. This technique originated in fracture/contact and is now used in multi‑physics settings such as hydro‑mechanical coupling \cite{Dugdale1960,Barenblatt1962,ParkPaulino2011,DurandTrinidade2021,deFranciscoCarol2020,GaroleraEtAl2013,EsfahaniGajo2024}. Our approach is related in spirit -- both frameworks enable inter‑element discontinuities via a mesh‑embedded interface, but differs in purpose and execution. Instead of introducing a dedicated cohesive element with extra interface degrees of freedom, we keep standard continuous FEM on the entire mesh, including flat elements, and recover DG by thresholding of Jacobians. Moreover, cohesive elements are conceptually a modeling device (the interface law is prescribed and discretized), whereas our TFEM‑DG construction is a numerical device that derives the DG interface terms from a geometric limit. Practically, cohesive formulations require explicit interface elements and duplicated trace degrees of freedom, while our method only edits the mesh and leaves the bulk FEM assembly unchanged.

The structure of the paper is as follows. In Section~\ref{sec:1D}, we deal with a simple model problem: Poisson's equation in 1D. In the 1D mesh (partition), we introduce auxiliary small elements of length $\beta$ at every cell interface (mesh node). Furthermore, we modify the diffusion coefficient to be proportional to $\beta$, namely $\beta D$ for some constant $D$. We show how the piecewise linear FEM formulation of such a problem leads, in the limit $\beta \to 0$, to the Babu\v{s}ka-Zl\'{a}mal DG scheme with jump‑penalization parameter $D$. Moreover, we show how the limiting DG scheme can be equivalently implemented by simply thresholding of the element Jacobians by $1/D$ on the zero‑measure elements.

In Section~\ref{sec:2D}, we extend the approach to two dimensions by considering Poisson's problem in 2D. We discuss the geometry of the auxiliary flat elements (of thickness $\beta$) introduced at cell interfaces and prove that in this case FEM converges to Babu\v{s}ka-Zl\'{a}mal DG with trapezoidal quadrature on edges, as $\beta \to 0$.

In Section~\ref{sec:Error_estimates}, we analyze the limiting DG scheme theoretically, proving optimal broken $H^1$ as well as $L^2$ error estimates. The theory also gives a lower bound for the penalization parameter $D$, or equivalently an upper bound on $J_{\min}$.

Finally, in Section~\ref{sec:Num_exp}, we present numerical experiments in 2D and 3D, demonstrating the sharpness of the presented theory, especially the necessary scaling of $D$ with respect to $h$. We also show how the scheme allows us to easily switch between FEM and DG in parts of the computational domain in an element‑by‑element fashion.

The presented framework is flexible. As noted, a companion paper extends these ideas to nonlinear convective problems including the compressible Euler equations. Altogether, the framework allows the easy and problem‑independent incorporation of DG into an existing FEM code, and the local switching between FEM and DG at will, even on an element‑by‑element basis. This choice might be driven by the physics (FEM for elasticity, DG for fluids), or by the user who wants to quickly and easily test the performance of DG vs.\ FEM on his/her problem. Thus, TFEM‑DG elevates the FEM--DG transition from a software challenge to a mesh‑level modeling choice, governed entirely by the user’s needs or the physics of the problem, rather than the implementation. What used to be a strict prohibition -- degenerate elements -- now becomes a tool we can deliberately exploit.
}
{
\section{Introduction}
The \emph{finite element method} (FEM) and \emph{discontinuous Galerkin} (DG) method are among the most widely used methods for the numerical solution of partial differential equations. Each method is naturally suited to different classes of problems: continuous FEM is typically used for diffusive or elliptic problems, while DG, with its discontinuous approximations, is more suited for nonlinear first‐order hyperbolic conservation laws. Both methods can be applied outside their natural settings, but this introduces additional difficulties. For instance, there exists a large `zoo' of DG formulations for Poisson's problem, with varying properties such as symmetry, consistency, or ellipticity \cite{arnold-et-al-unified-analysis-of-dg-for-eliptic-problems}. Conversely, FEM applied to hyperbolic problems typically requires global stabilization strategies, unlike DG where stabilization is naturally embedded in interface fluxes, possibly combined with limiters \cite{Shu2013BriefSurvey,ChenShuEntropyStableReview}.

From an implementation viewpoint, the two methods differ fundamentally. FEM uses globally continuous spaces with shared degrees of freedom and assembles only element volume integrals, whereas DG employs element-local unknowns and requires additional face/interface contributions involving jumps, averages, and numerical fluxes \cite{ReedHill1973,CockburnShu1998_V,Shu2013BriefSurvey}. As a result, combining FEM and DG in a single code is nontrivial. Nevertheless, such combinations are highly desirable, for example in fluid-structure interaction problems where elasticity in $\Omega_1$ is best treated with FEM and compressible Euler equations in $\Omega_2$ with DG \cite{Ciarlet,Shu2013BriefSurvey}.

The purpose of this paper is to present a simple and rigorous framework connecting DG to FEM. We show that DG can be obtained from FEM by inserting vanishing-thickness elements at cell interfaces and modifying the diffusion coefficient on these elements. The resulting continuous FEM formulation converges to a DG scheme. In this work, we focus on elliptic problems (Poisson), where the method leads to the Babu\v{s}ka-Zl\'{a}mal DG scheme with trapezoidal quadrature on interfaces \cite{Babuska-Zlamal,Babuska:1973:FEM_penalty,arnold-et-al-unified-analysis-of-dg-for-eliptic-problems}. A companion paper extends the approach to nonlinear hyperbolic problems. The key idea is that DG arises as the limit $\beta \to 0$, where $\beta$ is the thickness of the interface elements, allowing analysis via standard FEM techniques.

From an implementation perspective, the method is straightforward. DG can be recovered in any FEM code by introducing flat zero-measure interface elements and applying a simple threshold to (near-)zero Jacobians during assembly. This requires only minimal modification of an existing FEM implementation. This mechanism originates from the \emph{Tempered Finite Element Method} (TFEM) \cite{quiriny2024temperedfiniteelementmethod}, where thresholding of degenerate Jacobians was used to obtain stable solutions on meshes with degenerate elements. Here, we extend this idea to enforce jump penalization across all interfaces, leading to what we call \emph{TFEM-DG}. As discussed in Remark~\ref{rem:WOPSIP}, this scheme is closely related to WOPSIP DG \cite{WOPSIP}, with similar penalization requirements.

The use of degenerate elements is not new. Cohesive or zero-thickness interface elements are widely used to model discontinuities, introducing independent traces linked by a traction-separation law \cite{Dugdale1960,Barenblatt1962,ParkPaulino2011,DurandTrinidade2021,deFranciscoCarol2020,GaroleraEtAl2013,EsfahaniGajo2024}. While related in spirit, our approach differs fundamentally: we do not introduce additional interface unknowns, but retain a standard continuous FEM formulation and recover DG through Jacobian thresholding. Thus, cohesive elements serve as a modeling tool, whereas TFEM-DG is a numerical construction deriving DG from a geometric limit.

The paper is organized as follows. In Section~\ref{sec:1D}, we introduce the method on a 1D Poisson problem, inserting elements of size $\beta$ at interfaces and scaling the diffusion coefficient as $\beta D$. We show that, as $\beta \to 0$, the formulation converges to the Babu\v{s}ka-Zl\'{a}mal DG scheme with penalization parameter $D$, and that the same scheme is obtained by Jacobian thresholding. In Section~\ref{sec:2D}, we extend the construction to 2D and show convergence to the DG scheme with trapezoidal quadrature on edges. In Section~\ref{sec:Error_estimates}, we prove optimal broken $H^1$ and $L^2$ error estimates and derive bounds on the penalization parameter. Finally, Section~\ref{sec:Num_exp} presents numerical experiments in 2D and 3D, illustrating the sharpness of the theory and the dependence on $D$ and $h$, as well as the ability to switch locally between FEM and DG.

The framework is flexible and extends to nonlinear convective problems, including the compressible Euler equations. More generally, it enables the incorporation of DG into an existing FEM code in a simple, problem-independent manner, and allows local switching between FEM and DG on an element-by-element basis. What was traditionally viewed as a limitation -- the presence of degenerate elements -- becomes here a deliberate and useful numerical tool.
}

\section{From FEM to DG: 1D motivation}
\label{sec:1D}
As motivation for our approach we consider a simple 1D Poisson problem on an interval $(a,b)\subset\IR$:
\begin{equation}
\label{Cont_prob_strong_1D}
-u^{\prime\prime}=f,\quad \text{ on } (a,b)
\end{equation}
with zero boundary conditions (purely for simplicity). In a DG scheme we take a partition (mesh) of $[a,b]$ into intervals determined by the mesh nodes
\begin{equation}
\label{1D_partition_DG}
a=x_0<x_1<\ldots<x_{N+1}=b,
\end{equation}
which determine the intervals $I_i=[x_i,x_{i+1}]$, $i=0,\ldots,N$. We define the discontinuous piecewise linear space
\begin{equation}
V_h^D=\{v \in L^2([a, b]):v|_{I_i}\in \Pol_1(I_i),\ i=0,\ldots,N\},
\end{equation}
where $\Pol_1(I_i)$ is the space of linear functions on $I_i$. A function $v_h\in V_h^D$ has two values at every interior node $x_i$: $v_h(x_i^\pm)$ and the jump $[v_h]_{x_i}=v_h(x_i^+)-v_h(x_i^-)$. For consistency, we define $[v_h]_{x_0}=v_h(x_0^+)$ and $[v_h]_{x_{N+1}}=-v_h(x_{N+1}^-)$ at the endpoints. Moreover, when applying integration by parts on an interval $I_i$, we will use the standard notation $[v_h]_{x_i}^{x_{i+1}} = v_h(x_{i+1}^-)-v_h(x_i^+)$.

We wish to derive a DG formulation of (\ref{Cont_prob_strong_1D}) as a limit of the standard FEM formulation, where we degenerate certain `dummy' vertex elements. To this end, we enrich the partition (\ref{1D_partition_DG}) with nodes $y_i$:
\begin{equation}
\label{1D_partition}
a=y_0<x_0<y_1<x_1<\ldots<y_{N+1}<x_{N+1}=b,
\end{equation}
where we consider two types of intervals: $I_i=[x_i,y_{i+1}]$ and $J_i=[y_i,x_{i}]$. The idea is that we let each of the intervals $J_i$ degenerate to the single point $x_i$, cf. Figure~\ref{fig:FEM_mesh_1D_Solution}. Specifically, we let $\width\to 0$, where $\width=|J_i|$ for all $i=0,\dots,N+1$.

\begin{figure}
\centering
\includegraphics[width=0.8\textwidth]{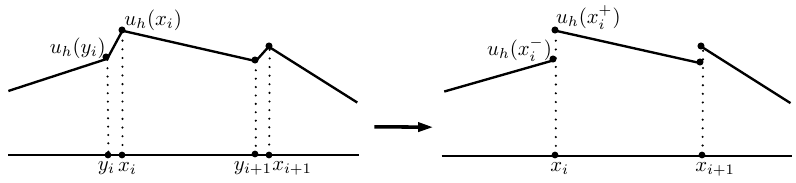}
\caption{FEM solution with auxiliary nodes $y_i$ inserted in the mesh (left) and limiting DG solution (right).}
\label{fig:FEM_mesh_1D_Solution}
\end{figure}

On the mesh (\ref{1D_partition}) we consider the continuous piecewise linear space
\begin{align}\begin{split}
V_h=\{v\in C([a,b]):&v|_{I_i}\in \Pol_1(I_i), i=0,\ldots,N;\\
&v|_{J_i}\in \Pol_1(J_i), i=0,\ldots,N+1; v(a)=v(b)=0\}.
\end{split}\end{align}
Our starting point is a modified version of (\ref{Cont_prob_strong_1D}):
\begin{equation}
\label{Cont_prob_strong_1D_mod}
-\big(\mathcal{D}(x)u^\prime\big)^\prime=f,
\end{equation}
where $\mathcal{D}(x)$ is a modified diffusion coefficient of the form
\begin{equation}
\label{Cont_prob_strong_1D_mod_D}
\mathcal{D}(x)=\begin{cases}
1,\quad &x\in I_i,\ i=0,\ldots,N,\\
{\width}D,\quad &x\in J_i,\ i=0,\ldots,N+1,
\end{cases}
\end{equation}
with a constant $D$ which will be determined appropriately later. We note that we have modified the original diffusion coefficient only on the vanishing elements $J_i$.

The FEM formulation of (\ref{Cont_prob_strong_1D_mod})  reads: Find $u_h\in V_h$ such that 
\begin{equation}
\label{FEM_1D}
\int_a^b \mathcal{D}\, u_h^\prime v_h^\prime\dx=\int_a^b f v_h\dx, \quad\forall v_h\in V_h.
\end{equation}

We now let the size ${\width}$ of the intervals $J_i$ go to zero. We note that since the intervals, basis function, test functions, and numerical solution all depend on $\beta$, we should write e.g. $u_h^\beta$. However, we omit this superscript to make the notation less cluttered, and use this notation only when clarity is essential. 

In the FEM formulation, the piecewise linear solution is determined by its values $u_h(y_i), u_h(x_i)$, while the DG solution is determined by its values $u_h(x_i^-), u_h(x_i^+)$. As we degenerate the auxiliary elements $J_i$ to the single point $x_i$ (e.g. by letting $y_i\to x_i$), we identify the values 
\begin{equation}
\label{convention:1}
u_h(y_i)=u_h(x_i^-),\ u_h(x_i)=u_h(x_i^+)
\end{equation}
in the limit, cf. Figure~\ref{fig:FEM_mesh_1D_Solution}. Using this convention, we can identify any $v_h\in V_h$ with a limiting $v_h\in V_h^D$. It will also be important to note that for any $v_h\in V_h$ 
\begin{equation}
\label{convention:2}
[v_h]_{y_i}^{x_i}\longrightarrow [v_h]_{x_i},
\end{equation}
as ${\width}\to 0$. Using these conventions, we can identify the limiting FEM scheme (\ref{FEM_1D}) for ${\width}\to 0$ with the following DG form. 

\begin{theorem}\label{thm:1D_conv}
The finite element scheme (\ref{FEM_1D}) for ${\width}=0$ is equivalent to the Babu\v{s}ka-Zl\'{a}mal DG method: Find $u_h\in V_h^D$ such that 
\begin{equation}
\label{TFEM_DG_1D}
\sum_{i=0}^{N}\int_{I_i} u_h^\prime v_h^\prime\dx + D\sum_{i=0}^{N+1}[u_h]_{x_i}[v_h]_{x_i}= \int_a^b fv_h\dx,\quad \forall v_h\in V_h^D.
\end{equation}
\end{theorem}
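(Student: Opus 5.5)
The approach is to split the FEM bilinear form in (\ref{FEM_1D}) into contributions from the regular intervals $I_i$ and the dummy intervals $J_i$, and then compute each part explicitly for piecewise linear functions. After that we pass to the limit $\width\to 0$, using the identifications (\ref{convention:1})--(\ref{convention:2}).

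\textbf{Step 1: the dummy intervals.} On $J_i=[y_i,x_i]$ the functions $u_h,v_h$ are linear, so their derivatives are constant:
\[
u_h'|_{J_i}=\frac{u_h(x_i)-u_h(y_i)}{\width}=\frac{[u_h]_{y_i}^{x_i}}{\width},
\]
and similarly for $v_h$. Hence
\[
\int_{J_i}\width D\,u_h'v_h'\dx=\width D\cdot \width\cdot\frac{[u_h]_{y_i}^{x_i}[v_h]_{y_i}^{x_i}}{\width^2}=D\,[u_h]_{y_i}^{x_i}[v_h]_{y_i}^{x_i}.
\]
This is exact for every $\width>0$ and does not depend on $\width$ except through the nodal values. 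By (\ref{convention:2}) it tends to $D[u_h]_{x_i}[v_h]_{x_i}$. The endpoints need a separate check. On $J_0$, $v_h(a)=v_h(y_0)=0$, so the contribution is $D\,v_h(x_0)\cdots$, which matches the convention $[v_h]_{x_0}=v_h(x_0^+)$. On $J_{N+1}$, $v_h(x_{N+1})=0$ and $v_h(y_{N+1})=v_h(x_{N+1}^-)$, which matches $[v_h]_{x_{N+1}}=-v_h(x_{N+1}^-)$ (the sign squares away in the product).

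\textbf{Step 2: the regular intervals and the load term.} On $I_i=[x_i,y_{i+1}]$, the restriction of a function in $V_h$ is the linear function determined by its values at $x_i$ and $y_{i+1}$. These values are identified with $v_h(x_i^+)$ and $v_h(x_{i+1}^-)$. As $\width\to0$, the interval $I_i$ tends to $[x_i,x_{i+1}]$, so $\int_{I_i}u_h'v_h'\dx$ tends to the DG volume term for the corresponding element of $V_h^D$. The right-hand side splits as $\int_a^b fv_h\dx=\sum_i\int_{I_i}fv_h\dx+\sum_i\int_{J_i}fv_h\dx$. The second sum is bounded by $\width\,\|f\|_{L^1}$-type quantities times $\max|v_h|$, so it vanishes. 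For this we assume $f$ is integrable, or at least continuous near the nodes, so that $\int_{J_i}|f|\to0$ as $|J_i|\to0$.

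\textbf{Step 3: equivalence of spaces.} The map sending $v_h\in V_h$ to its nodal values $(v_h(y_i),v_h(x_i))$ is a bijection with the DG degrees of freedom $(v_h(x_i^-),v_h(x_i^+))$ of $V_h^D$. Both spaces carry the same boundary constraints: $v(a)=0$ and $v(b)=0$ are absorbed into the boundary jump terms. So test functions correspond one-to-one. Therefore the limiting system of (\ref{FEM_1D}), written in nodal unknowns, is exactly the linear system of (\ref{TFEM_DG_1D}).

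\textbf{Main obstacle.} The main obstacle is making precise what ``the scheme for $\width=0$'' means, since $V_h$ itself degenerates. I would interpret it as the limit of the coefficient matrices and load vectors in the nodal basis. Step 1 shows the penalty entries are independent of $\width$, and Step 2 shows the remaining entries converge. Hence the matrix and load vector converge to those of (\ref{TFEM_DG_1D}). If convergence of the solutions is also wanted, it follows from invertibility of the limiting DG matrix, which is symmetric positive definite for $D>0$.
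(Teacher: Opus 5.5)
Your proposal is correct and follows essentially the same route as the paper: split the FEM form over the $I_i$ and $J_i$, compute the $J_i$ contribution exactly as $D[\cdot]_{y_i}^{x_i}[\cdot]_{y_i}^{x_i}$ (independent of $\width$ in the nodal basis), and pass to the limit using the conventions (\ref{convention:1})--(\ref{convention:2}). You also fill in details the paper leaves implicit: the boundary jumps, the vanishing load contributions on the $J_i$, and reading ``$\width=0$'' as convergence of the nodal matrix and load vector, with solution convergence following from the SPD limiting matrix. One small correction: the load-term smallness comes from absolute continuity of $\int|f|$, as your next clause says, not from an $O(\width)$ bound.
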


\begin{remark}
The \emph{Babu\v{s}ka-Zl\'{a}mal DG scheme} is a classical method which uses jump penalties to enforce weak continuity at element interfaces. It is a nonconsistent, optimally convergent method, cf. \cite{arnold-et-al-unified-analysis-of-dg-for-eliptic-problems, Babuska-Zlamal, Babuska:1973:FEM_penalty}. 
\end{remark}

\begin{remark}
We note that the intervals $J_i$ have `disappeared' from the limiting formulation, their only remnant being the emergence of the penalization $D[u_h]_{x_i}[v_h]_{x_i}$. 
\end{remark}

\begin{proof}[Proof of Theorem~\ref{thm:1D_conv}]
Let $\varphi_i$, $i=1,\ldots,2N$, be the standard `tent' basis functions of $V_h$ corresponding to all the individual interior nodes $x_i,y_i$. We note that there are no basis functions corresponding to $y_0=a$ and $x_{N+1}=b$ due to the zero Dirichlet boundary conditions at these points. We write the solution of (\ref{FEM_1D}) as $u_h(x)=\sum_{j=1}^{2N} U_j\varphi_j(x)$ and set $v_h:=\varphi_k$ for some $k$. Then (\ref{FEM_1D}) becomes
\begin{equation}
\label{thm:1D_conv:1}
\sum_{j=1}^{2N} U_j\int_a^b \mathcal{D}\,\varphi_j^\prime \varphi_k^\prime \dx= \int_a^b f\varphi_k\dx.
\end{equation}
Consider the left-hand side integral over an element $J_i$. Since $\varphi_j^\prime=[\varphi_j]_{y_i}^{x_i}/{\width}$, we have
\begin{equation}
\label{thm:1D_conv:6}
\int_{J_i} \mathcal{D}\,\varphi_j^\prime \varphi_k^\prime \dx =\int_{J_i} {\width}{D}\frac{1}{{\width}}[\varphi_j]_{y_i}^{x_i} \frac{1}{{\width}}[\varphi_k]_{y_i}^{x_i}\dx ={D}[\varphi_j]_{y_i}^{x_i} [\varphi_k]_{y_i}^{x_i},
\end{equation}
since $|J_i|={\width}$ and all the integrands are constant. Hence
\begin{equation}
    \begin{split}
        \label{thm:1D_conv:7}
\int_a^b \mathcal{D}\,\varphi_j^\prime \varphi_k^\prime \dx &=  \sum_{i=0}^{N+1}\int_{I_i} \varphi_j^\prime \varphi_k^\prime \dx + \sum_{i=0}^{N+1}\int_{J_i} {\width}D\varphi_j^\prime \varphi_k^\prime \dx\\ &= \sum_{i=0}^{N+1}\int_{I_i} \varphi_j^\prime \varphi_k^\prime \dx +D\sum_{i=0}^{N+1}[\varphi_j]_{y_i}^{x_i} [\varphi_k]_{y_i}^{x_i}.
    \end{split}
\end{equation}
We plug this expression into (\ref{thm:1D_conv:1}) and take the limit ${\width}\to 0$. If we apply the convention (\ref{convention:2}), we get the limiting formulation 
\begin{equation}
\label{thm:1D_conv:8}
\sum_{j=1}^{2N} U_j\sum_{i=0}^{N+1}\int_{I_i} \varphi_j^\prime \varphi_k^\prime \dx +D\sum_{j=1}^{2N} U_j\sum_{i=0}^{N+1}[\varphi_j]_{x_i} [\varphi_k]_{x_i}= \int_a^b f\varphi_k\dx,
\end{equation}
which is equivalent to the scheme (\ref{TFEM_DG_1D}).
\end{proof}

\subsection{Implementation}
The actual implementation of the limiting DG scheme from Theorem~\ref{thm:1D_conv} is very easy and can be done by a simple modification (basically one line of code) in any FEM code using the \emph{Tempered Finite Element} (TFEM) approach, cf. \cite{quiriny2024temperedfiniteelementmethod}. This will also be the case in higher dimensions. Thus we can easily implement a DG scheme in a FEM code by simply collapsing certain dummy interface elements and trivially modify the program itself. Here are the details.

Standard FEM implementations compute the elemental entries of a stiffness matrix by mapping onto a reference element, e.g. $(0,1)$ in 1D. Let $J_i$ be a (possibly zero-measure) element, $F:(0,1)\to J_i$ be the mapping from the reference element, and $J$ be the Jacobian of the mapping (obviously $J={\width}$ in 1D). Then    
\begin{equation}
\label{1D_implementation:1}
\int_{J_i} \varphi_j^\prime \varphi_k^\prime \dx =\int_0^1 \frac{\hat{\varphi}_j^\prime}{J} \frac{\hat{\varphi}_k^\prime}{J} J \d \hat{x} = \frac{1}{J}\int_0^1 \hat{\varphi}_j^\prime \hat{\varphi}_k^\prime \d \hat{x}.
\end{equation}
Clearly, problems arise when $J\sim 0$. The idea of TFEM is to replace the problematic Jacobian in the denominator by a threshold $J_{\min}>0$, in our case we take $J_{\min}=1/D$ where $D$ is the constant from (\ref{Cont_prob_strong_1D_mod_D}). If we do this modification in (\ref{1D_implementation:1}), we get 
\begin{equation}
\label{1D_implementation:2}
\frac{1}{J_{\min}}\int_0^1 \hat{\varphi}_j^\prime \hat{\varphi}_k^\prime \d \hat{x} = \frac{J}{J_{\min}}\int_{J_i} \varphi_j^\prime \varphi_k^\prime \d {x}= {\width}D\int_{J_i} \varphi_j^\prime \varphi_k^\prime \d {x}, 
\end{equation}
since $J_{\min}=1/D$ and $J={\width}$. Altogether, we see that introducing a threshold $J_{\min}=1/D$ on the minimal allowed value of the Jacobian $J$ is equivalent to introducing the modified diffusion coefficient ${\width}D$ in the mathematical formulation. Theorem~\ref{thm:main} states that $D\gtrsim h^{-3}$ is necessary, hence the choice $J_{\min}\lesssim h^{3}$ in 1D. 

The situation is essentially the same in $\R^d$, as we shall see in Section~\ref{sec:2D}. The only difference being the optimal choice of $J_{\min}\lesssim h^{d+2}$, cf. Remark~\ref{rem_implementation_2D} on page \pageref{rem_implementation_2D}. Also, we note that in 2D we insert two degenerate elements along every edge of $\mct_h$, cf. Figure~\ref{fig:2D_triangulation_FEM_to_DG}, and three degenerate elements along every face of $\mct_h$ in 3D, cf. Figure~\ref{fig:3D_face_tetrahedra}.  

Altogether, we have the following easy implementation of the Babu\v{s}ka-Zl\'{a}mal scheme from Theorem~\ref{thm:1D_conv}, written here in the case of general $\R^d$.

\medskip
\noindent\fbox{
  \begin{minipage}{0.96\linewidth}
  \label{Implementation}
  \noindent\textbf{Implementation of DG in a FEM code in $\R^d$:}
\begin{enumerate}
    \item Choose a constant \texttt{J\_{min}} $\lesssim h^{d+2}$ (or equivalently $D\gtrsim h^{-3}$).
    \item Take the desired DG mesh and insert $d$ zero-measure elements at every element interface.
    \item Modify the FEM code by prohibiting division by (numerical) zero when computing the element contributions to the stiffness matrix: If the element Jacobian $J$ is too small, threshold it by
    \begin{center}
        \texttt{if (J < \texttt{J\_{min}})\ \  J = \texttt{J\_{min}}}
    \end{center}
    \smallskip
\end{enumerate}
\end{minipage}
}

\smallskip
The resulting FEM code will not only run (even though there are degenerate elements in the mesh), but will be equivalent to a DG implementation! In higher dimensions the recipe is basically the same, although the structure of the zero-measure elements is slightly more complicated.  

In Figure~\ref{fig:1D}, we demonstrate the result of the TFEM-DG approach in 1D. The problem is  $-u^{\prime\prime}=4$ on $[0,1]$. We deliberately choose a very coarse mesh (only 10 elements), so that the discontinuity of the limiting solution is reasonably visible, on finer meshes the DG solution is practically indistinguishable from a continuous FEM solution. We note that we actually plot the whole FEM solution on the mesh augmented by degenerate dummy elements $J_i$ -- this manifests itself as the `vertical' line segments representing the linear solution on the zero-measure elements $J_i$. If we wanted to only plot the true limiting DG solution (without plotting the solution on degenerate elements $J_i$), we would only display the solution on the large elements $I_i$. Less trivial examples are presented in Section~\ref{sec:Num_exp}.

\begin{figure}
    \centering
    \includegraphics[width=0.9\textwidth]{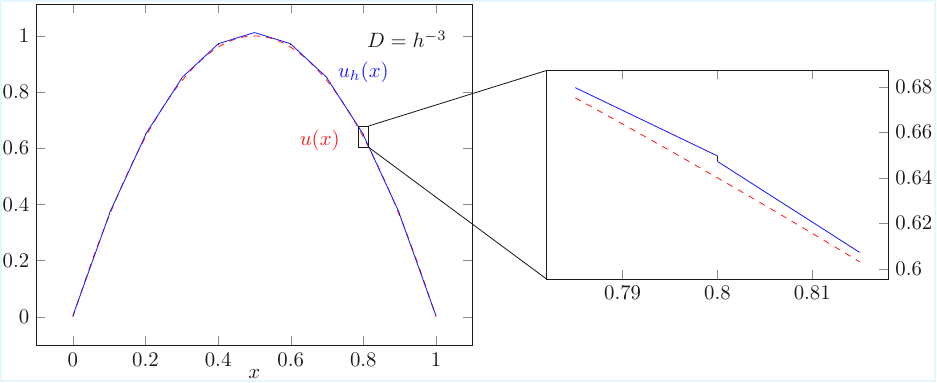}
    \caption{TFEM-DG solution of the problem $-u^{\prime\prime}=4$ on $[0,1]$ with $D=h^{-3}$. Coarse mesh with 10~elements.}
    \label{fig:1D}
\end{figure}

\section{From FEM to DG: 2D}
\label{sec:2D}
Now we will extend the 1D technique to Poisson's problem in 2D. The recipe is essentially the same. To obtain DG on a mesh $\mct_h$, we insert `dummy' elements of thickness $\beta$ along every edge (and possibly also around every vertex), solve a slightly modified problem with diffusion proportional to $\beta$ on the dummy elements, and then degenerate the dummy elements to zero thickness by letting $\beta\to 0$. The limit will also correspond to a DG scheme (Babu\v{s}ka-Zl\'{a}mal with edge quadrature) which will be trivially implementable in a vanilla FEM code by simple thresholding of zero Jacobians. In this section we will derive the exact form of the limiting scheme, for simplicity on a unit square and uniform mesh, however the limiting scheme will be identical on any other mesh. 

Consider $\Omega=[0, 1]^2$, the Sobolev space $V = H^1_0(\Omega)$, and the following problem: Find $u\in V$ such that for all $v \in V$  the following equation holds: 
\begin{align}
\label{rce:slaba-formulace}
    \int_\Omega \nabla u \cdot \nabla v\,dx = \int_\Omega fv \,dx,
\end{align}
where $f \in L^2(\Omega)$ is a given function. This problem is a weak formulation of Poisson's problem with homogeneous Dirichlet boundary conditions on a unit square.

We consider a conforming triangulation $\mct_h$ of $\overline{\Omega}$, i.e. a partition into closed triangles with mutually disjoint interiors whose intersection is either empty, a vertex, or a whole edge. Here the mesh parameter $h=\max_{K\in\mct_h} h_K$ is the largest element diameter. Let $\mbe_h$ denote the set of all edges $\Gamma$ of all elements $K\in\mct_h$. We orient every edge $\Gamma$ by a arbitrarily oriented unit normal $n_\Gamma$. We can then define the two one-sided limits of a piecewise defined function $f:\Omega\to\IR$  at $x\in\Gamma$:
\begin{equation}
f^+(x)=\lim_{t\to 0+}f(x+t n_\Gamma),\quad f^-(x)=\lim_{t\to 0-}f(x+t n_\Gamma)
\end{equation} 
and the jump $[f]_x = f^-(x)-f^+(x)$,
where we will sometimes omit the subscript $x$ for simplicity. Finally, we denote the set of the two endpoints of $\Gamma$ as $e(\Gamma)$.

Let $\mct_h$ be the mesh on which we wish to derive the DG scheme. For now we consider, for simplicity of presentation, a uniform mesh formed by the partition of $\Omega$ into $(1/h)^2$ smaller squares along with their diagonals, cf. Figure~\ref{fig:2D_triangulation_FEM_to_DG}~(\emph{Left}).

\begin{figure}
    \centering
    \includegraphics[width=0.3\linewidth]{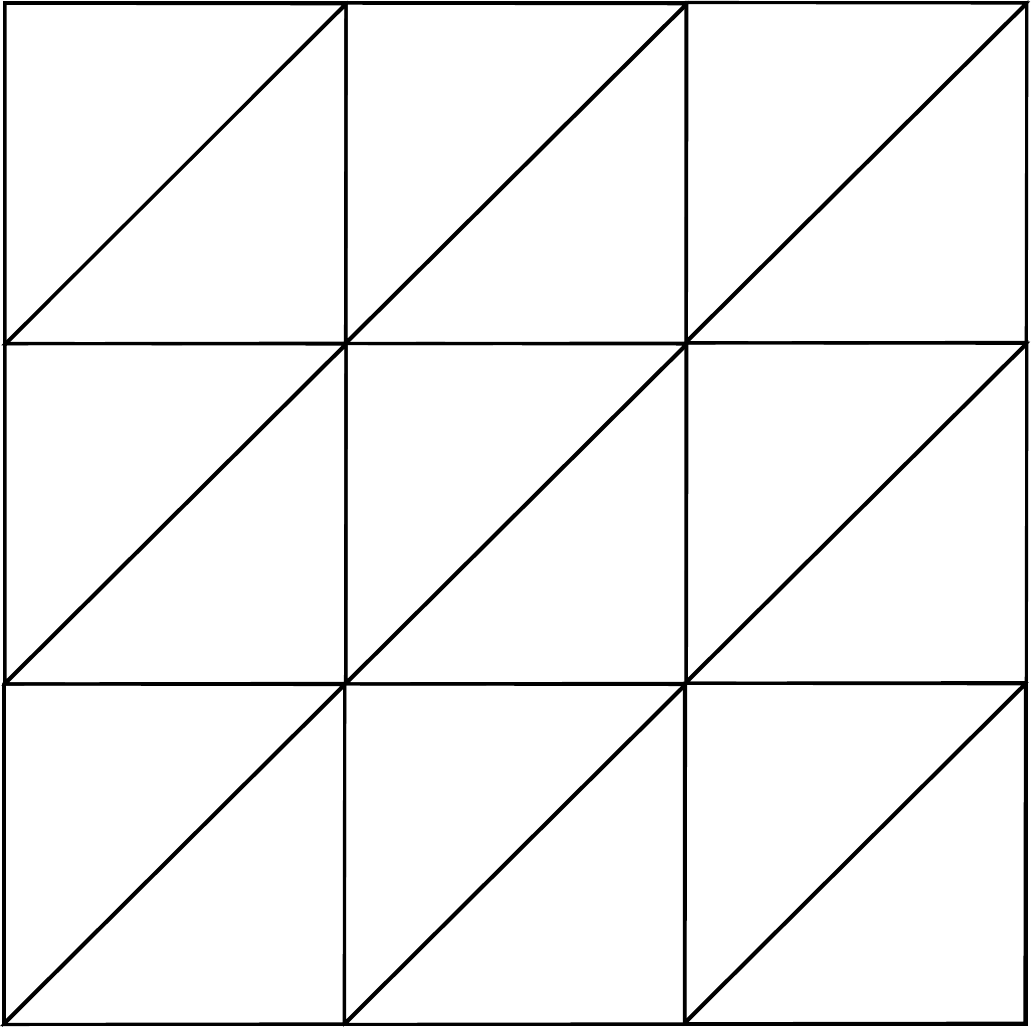}
    \includegraphics[width=0.3\linewidth]{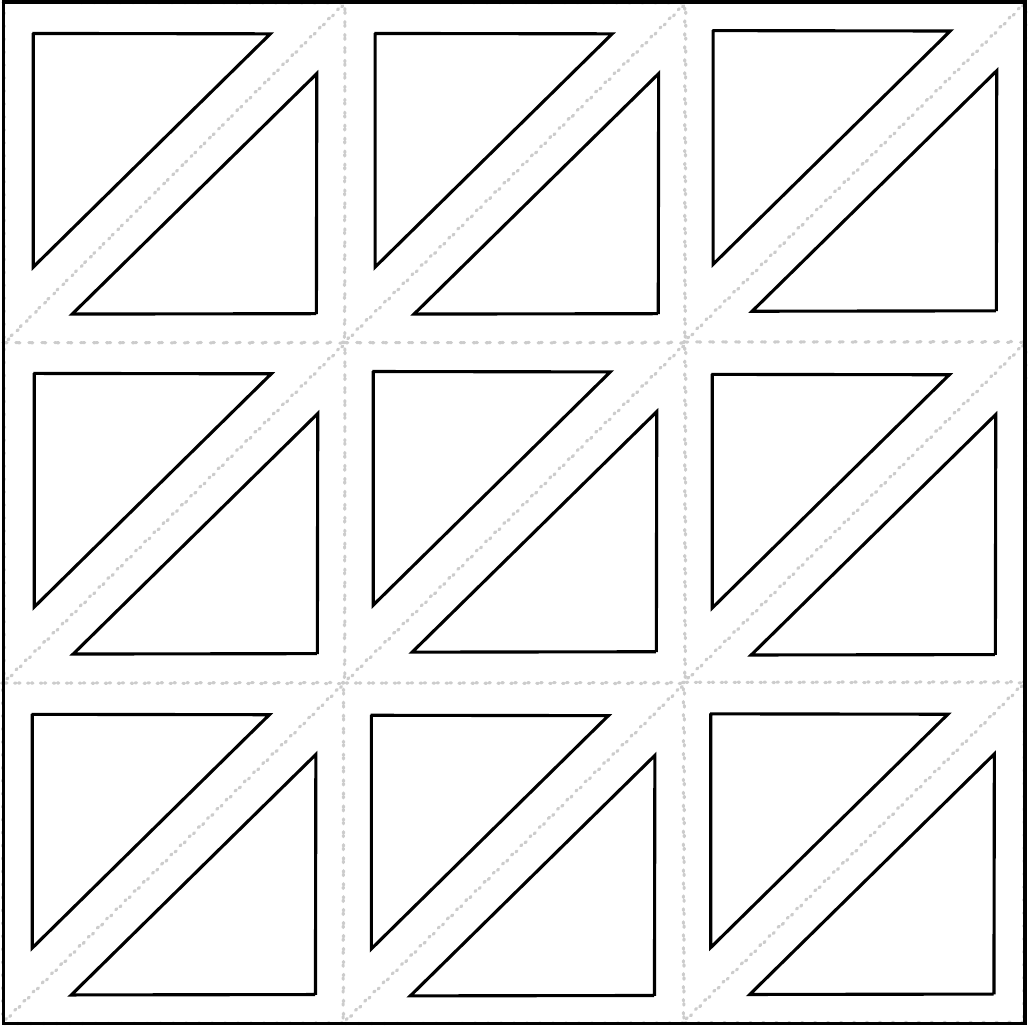}
    \includegraphics[width=0.3\linewidth]{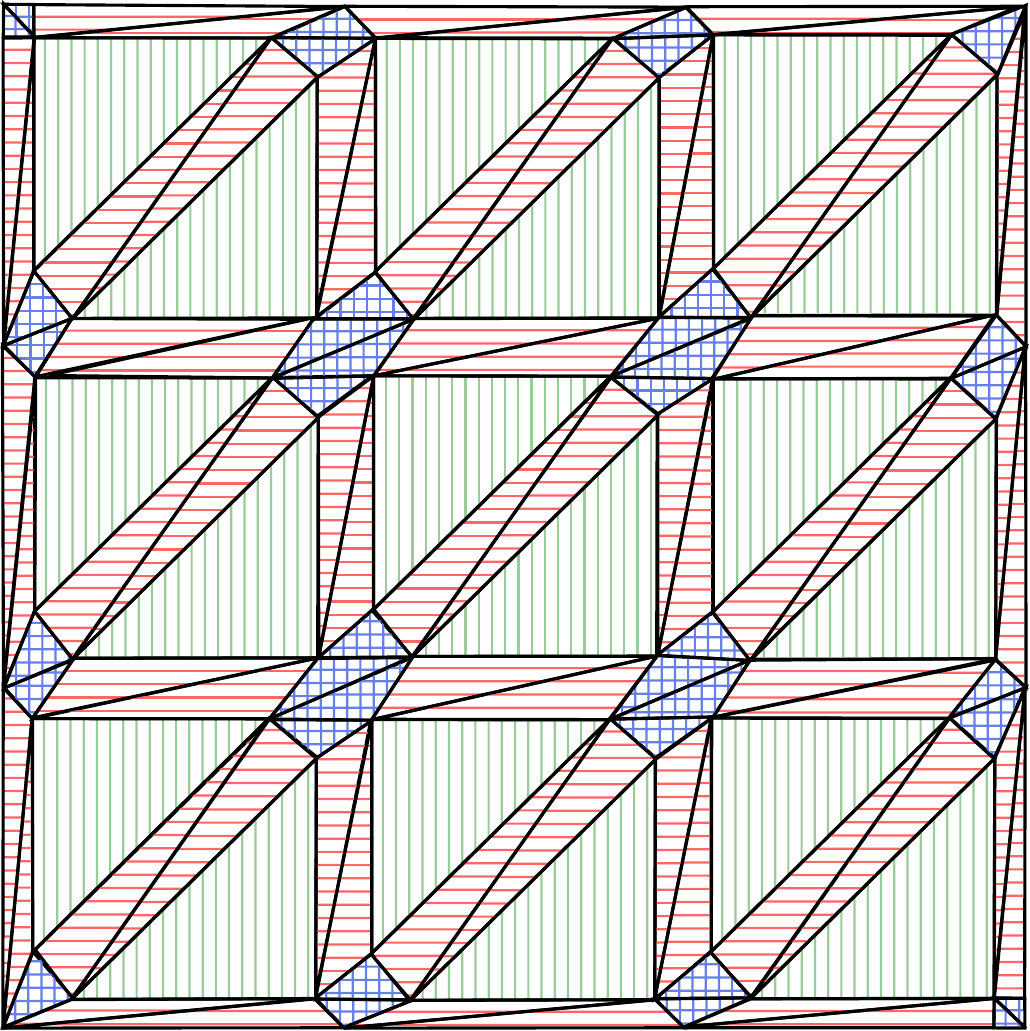}
    \caption{\emph{Left:} Triangulation $\mct_h$ for DG discretization. \emph{Center:} Intermediate `mesh' with shrunken elements. \emph{Right:} Final mesh $\mct_h^\beta$ for equivalent FEM discretization;  $\mce_T$ -- vertical hatch (green), $\mce_E$ -- horizontal hatch (red), $\mce_V$ -- cross‑hatch (blue).}
    \label{fig:2D_triangulation_FEM_to_DG}
\end{figure}





From $\mct_h$ we construct a mesh $\mct_h^\beta$ containing dummy elements of width $\sim\beta$ (sufficiently small) which we will eventually degenerate as $\beta\to 0$. In the first step, we shrink every element $K\in\mct_h$ so that the sides of the new triangles are parallel to the original sides, and the distance from the new to the original sides is $\frac{\beta}{2}, \beta \in \left(0, \beta_{\text{max}}\right)$, where $\beta_{\text{max}} > 0$ is sufficiently small. Note that this creates gaps of width $\beta$ between two adjacent triangles, cf. Figure~\ref{fig:2D_triangulation_FEM_to_DG}~(\emph{Center}).


In the second step, we add triangles to produce a valid mesh, cf.~Figure~\ref{fig:2D_triangulation_FEM_to_DG}~(\emph{Right}). This mesh is obtained by (i) inserting two `dummy' triangular elements along every edge, and (ii) meshing the resulting `holes' adjacent to the original vertexes. In our case the holes are hexagons and can be triangulated in an arbitrary way (without adding any new nodes). We will call this mesh $\mct_h^\beta$. Note that taking the limit $\beta \to 0+$ results in a mesh that looks the same as the original mesh $\mct_h$ in Figure~\ref{fig:2D_triangulation_FEM_to_DG}~(\emph{Left}), but with many `invisible' (i.e. zero-measure) `edge' and `vertex' elements. 

Our goal is to implement DG on $\mct_h$ by taking FEM on $\mct_h^\beta$ and letting $\beta\to 0$. Then the large triangles of $\mct_h^\beta$ correspond (in the limit) to the elements of $\mct_h$. 

\begin{remark}[\bf Implementation]
    This procedure of producing an auxiliary mesh $\mct_h^\beta$ and limiting $\beta\to 0$ is done only for the purpose of deriving the mathematical form of the limiting DG scheme. In practice, one directly constructs a mesh $\mct_h^0$ containing the zero-measure `dummy' edge elements derived from the original mesh $\mct_h$. This is simply a question of supplying the modified mesh topology to the FEM program. 
\end{remark}

\begin{remark}[\bf Number of DOFs]
It is important to note that FEM on the mesh $\mct_h^\beta$ has exactly the same number of DOFs (degrees of freedom) as DG on $\mct_h$! Indeed, the FEM DOFs on $\mct_h^\beta$ are the vertices of the large triangles corresponding to the triangles of $\mct_h$, just as in DG. If $|\mct_h|$ is the number of elements in $\mct_h$, altogether there are $3\times|\mct_h|$ degrees of freedom in FEM on $\mct_h^\beta$, as well as DG on $\mct_h$. This is the point of this specific construction of $\mct_h^\beta$, not to introduce superfluous degrees of freedom. 
\end{remark}

\begin{definition}
    We distinguish three types of elements in $\mct_h^\beta= \mce_T \cup \mce_E \cup \mce_V$, color-coded in Figure~\ref{fig:2D_triangulation_FEM_to_DG}~({Right}) for clarity:
    
    \begin{enumerate}
        \item $\mce_T$: Elements corresponding to the elements of the original mesh $\mct_h$ (vertically hatched, green). The subscript $T$ reflects this fact, as well as the terminology `\emph{Thick elements}'. They have nonzero area even for $\beta=0$. These are the elements with the resulting DG approximation corresponding to $\mct_h$.

        \item $\mce_E$: `\emph{Edge elements}' with height $\beta$ and length $\sim h$ placed along the edges of $\mct_h$ (horizontally hatched, red). These degenerate to edges of $\mct_h$ as $\beta\to 0$.

        \item $\mce_V$: `\emph{Vertex elements}' with diameter $\sim\beta$ placed around the vertices of $\mct_h$ (cross‑hatched, blue). These degenerate to vertexes of $\mct_h$ as $\beta\to 0$.
        
    \end{enumerate}
\end{definition}


\begin{definition}
    Let $h,\beta > 0$. We define the continuous piecewise linear FEM space $V_h\subset V$ on $\mct_h$ as
    $$V_h = \left\{v_h \in C(\overline{\Omega}): \restr{v_h}{K} \in \Pol_1(K), \forall K \in \mct_h, \restr{v_h}{\partial\Omega} = 0\right\},$$
    continuous piecewise linear FEM space $V_h^\beta\subset V$ on $\mct_h^\beta$ as
    $$V_h^\beta = \big\{v_h \in C(\overline{\Omega}): \restr{v_h}{K} \in \Pol_1(K), \forall K \in \mct_h^\beta, \restr{v_h}{\partial \Omega} = 0\big\},$$
    and the discontinuous piecewise linear FEM space $V_h^D$ on $\mct_h$ as 
    $$V_h^D = \left\{v_h \in L^2(\Omega): \restr{v_h}{K} \in\Pol_1(K), \forall K\in\mct_h\right\}.$$
\end{definition}

We now discretize Poisson's problem (\ref{rce:slaba-formulace}) -- first using FEM on $\mct_h^\beta$, which will give an equivalent DG formulation on $\mct_h$ as $\beta\to 0$. Similarly as in the 1D case (\ref{FEM_1D}), we will modify the diffusion coefficient on the elements $\mce_E,\mce_V$ which we will eventually degenerate. We seek a function $u_h \in V_h^\beta$ such that for all $v_h \in V_h^\beta$
\begin{equation}
\label{2D:mod_eq}
\int_\Omega \mathcal{D}\left(x\right) \nabla u_h \cdot \nabla v_h \dx = \int_\Omega fv_h \dx,
\end{equation}
\noindent where the modified diffusion $\mathcal{D}(x)$ is determined by a suitably chosen parameter $D$:
\begin{equation}
    \mathcal{D}(x) =
        \begin{cases}
            1, &  x \in \mce_T, \\
            \beta D, &  x \in \mce_E \cup \mce_V.
    \end{cases}
\end{equation}

We wish to derive the limiting DG scheme to which the FEM scheme (\ref{2D:mod_eq}) reduces as $\beta\to 0$. Let $N =dim(V_h^\beta) = dim(V_h^D)$ and $\varphi_1^\beta, \dots, \varphi_N^\beta$ be standard tent basis functions of $V_h^\beta$. We set the test function $v_h = \varphi_k^\beta$ for some $k\in\{1,\dots, N\}$, and write $u_h$ in terms of coefficients $U_j \in \R, \forall j\in\{1, \dots, N\}$, i.e.
\begin{equation}
u_h = \sum_{j=1}^N U_j\varphi_j^\beta.
\end{equation}
We note that $u_h, \mce_T$, etc. should all have indexes $\beta$, however we will usually omit them to keep the notation less cluttered, except when clarity is of essence.

Now the left-hand side of scheme (\ref{2D:mod_eq}) can be equivalently written as
\begin{equation}
\label{eq:FEM_2D_beta_3_integrals}
\sum_{j=1}^NU_j\bigg(
\underbrace{
    \int_{\mce_T} \nabla\varphi_j^\beta\cdot\nabla\varphi_k^\beta \dx
}_{{(I_1)}} + 
\underbrace{
\int_{\mce_E} \beta D\nabla\varphi_j^\beta\cdot\nabla\varphi_k^\beta \dx
}_{{(I_2)}} + 
\underbrace{
\int_{\mce_V} \beta D\nabla\varphi_j^\beta\cdot\nabla\varphi_k^\beta \dx
}_{{(I_3)}}
\bigg). 
\end{equation}
If we let $\beta\to 0$ and examine the individual terms (for a rigorous derivation see the proof of Theorem~\ref{thm:2D_DG_scheme}), we get:

\begin{itemize}[leftmargin=1em]
    \item $(I_1)$: These terms result in the elemental terms in the DG formulation.
    \item $(I_2)$: These terms give rise to the jump penalty terms in the DG formulation.
    \item $(I_3)$: These terms disappear from the limiting formulation altogether.
\end{itemize}

\begin{theorem}
\label{thm:2D_DG_scheme}
Let $\mct_h$ be a fixed mesh. Let $\mct_h^\beta$ be constructed from $\mct_h$ as described above by inserting edge and vertex dummy elements with thickness parameter $\beta$. Then the FEM formulation (\ref{2D:mod_eq}) on $\mct_h^\beta$ in the limit $\beta\to 0+$ is equivalent to the following DG scheme on $\mct_h$: We seek $u_h\in V_h^D$ such that for all $v_h\in V_h^D$, 
\begin{align}
\label{thm:2D_DG_scheme:BZ-limitni-TFEM-formulace-suma-pres-vrcholy}
\sum_{K\in\mct_h}\int_{K} \nabla u_h\cdot\nabla v_h \dx + 
\frac{D}{2} \sum_{\Gamma \in \mbe_h}\sum_{a\in e(\Gamma)} |\Gamma|[ u_h ]_a[ v_h]_a
=
\int_\Omega fv_h \dx.
\end{align}
\end{theorem}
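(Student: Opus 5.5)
We follow the proof of Theorem~\ref{thm:1D_conv} and work with the stiffness matrix entries $\int_\Omega \mathcal{D}\nabla\varphi_j^\beta\cdot\nabla\varphi_k^\beta\dx$, split as $(I_1)+(I_2)+(I_3)$ in (\ref{eq:FEM_2D_beta_3_integrals}). We compute the limit of each part as $\beta\to 0+$.

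The first step is to identify the degrees of freedom. Every node of $\mct_h^\beta$ is a vertex of exactly one thick element in $\mce_T$. As $\beta\to0$ that element converges to some $K\in\mct_h$, and the node converges to a vertex of $K$. We therefore identify $\varphi_j^\beta$ with the discontinuous basis function of $V_h^D$ that equals $1$ at that vertex of $K$ only. This gives the bijection $V_h^\beta\leftrightarrow V_h^D$ of the DOF remark. Under this identification, $\restr{\varphi_j^\beta}{\mce_T}$ converges, after the affine map of each shrunken triangle onto $K$, to the DG basis function together with its gradient. Since the shrinking maps are affine with Jacobians tending to the identity, $(I_1)\to\sum_K\int_K\nabla\varphi_j\cdot\nabla\varphi_k\dx$. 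By the same argument $\int_\Omega f\varphi_k^\beta\dx\to\int_\Omega f\varphi_k\dx$. For this we use dominated convergence: the functions are bounded by $1$, and the dummy region has measure $O(\beta)$.

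The second step treats the edge elements, which give $(I_2)$. Fix an edge $\Gamma$ with endpoints $a,b$. In the gap of width $\beta$ along $\Gamma$ there are two triangles. One has vertices $a^-,b^-,a^+$ (say), the other has $b^-,a^+,b^+$. Here $a^\pm,b^\pm$ are the nearby vertices of the two adjacent thick elements. For a linear $v$ on such a triangle $T$, write its gradient in the frame $(t,n)$ of $\Gamma$. The tangential component is bounded, since it equals the difference of nodal values divided by a length $\sim|\Gamma|$, and $|T|\sim\beta|\Gamma|/2$. Hence the tangential contribution to $\beta D\int_T\nabla u\cdot\nabla v$ is $O(\beta^2)\to0$. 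The normal component is $\partial_n v = (\text{value at the opposite vertex}-\text{interpolated value on the base})/\beta + O(1)$.

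The key calculation is the first triangle. Its base $a^-b^-$ lies on one side, and its apex $a^+$ sits at distance $\beta$ above $a^-$ in the limit. Then $\partial_n v\approx [v]_a/\beta$, so
\begin{equation*}
\beta D\,\frac{[u]_a[v]_a}{\beta^2}\cdot\frac{\beta|\Gamma|}{2}\;\longrightarrow\;\frac{D}{2}|\Gamma|[u]_a[v]_a .
\end{equation*}
Symmetrically, the second triangle gives $\frac{D}{2}|\Gamma|[u]_b[v]_b$. Summing over $\Gamma\in\mbe_h$ yields exactly the penalty in (\ref{thm:2D_DG_scheme:BZ-limitni-TFEM-formulace-suma-pres-vrcholy}).

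The main obstacle is doing this calculation carefully. The apex lies only approximately above the base endpoint, offset by $O(\beta)$ horizontally because of the vertex holes. So we must check that all correction terms are $o(1)$ after multiplying by $\beta D\cdot|T|/\beta^2 = O(1)$. I would use explicit barycentric gradient formulas $\nabla\lambda_i=-\frac{|e_i|}{2|T|}n_{e_i}$. The gradient pieces are then exact, and only the area and geometric offsets carry $O(\beta)$ errors. Boundary edges are handled the same way, with one-sided jumps and the zero Dirichlet value playing the role of the missing neighbour.

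The third step is the vertex elements, which give $(I_3)$. Each has diameter $\sim\beta$, and shape regularity is fixed as $\beta$ varies, since the hole is a scaled copy. So $|\nabla\varphi_j^\beta|\lesssim\beta^{-1}$ and the area is $\sim\beta^2$. Hence $\beta D\int\nabla\varphi_j^\beta\cdot\nabla\varphi_k^\beta=O(\beta)\to0$. Combining the three limits, the limiting linear system coincides entry by entry with the DG system (\ref{thm:2D_DG_scheme:BZ-limitni-TFEM-formulace-suma-pres-vrcholy}) in the basis of $V_h^D$, which is the claimed equivalence.
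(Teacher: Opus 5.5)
Your argument is correct and is essentially the paper's proof: the same split into thick, edge and vertex elements, the homothety argument on $\mce_T$, the tangential/normal decomposition on each edge triangle with the tangential part $O(\beta^2)$ and the normal part giving $\frac{D}{2}|\Gamma|[u_h]_a[v_h]_a$ from the triangle whose apex lies over $a$, and the $O(\beta)$ scaling bound on $\mce_V$. The differences are minor: the paper needs no $O(1)$ offset correction because it writes the normal derivative exactly as $\bigl(\varphi(A)-\widetilde{\varphi}^{+}(A_0)\bigr)/\beta$, with $A_0$ the foot of the altitude and $\widetilde{\varphi}^{+}$ the linear extension from the neighbouring thick element; you additionally justify convergence of the load vector $\int_\Omega f\varphi_k^\beta\dx$, which the paper leaves implicit.
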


\begin{remark}[\bf Relation to Babu\v{s}ka-Zl\'{a}mal DG]
The limiting scheme (\ref{thm:2D_DG_scheme:BZ-limitni-TFEM-formulace-suma-pres-vrcholy}) is very close to the classical Babu\v{s}ka-Zl\'{a}mal DG scheme with jump penalties, \cite{arnold-et-al-unified-analysis-of-dg-for-eliptic-problems}:
\begin{equation}
\label{eq:Babuska-Zlamal_2D}
    \sum_{K\in\mct_h}\int_{K} \nabla u_h\cdot\nabla v_h \dx + 
{D} \sum_{\Gamma \in \mbe_h}\int_\Gamma [ u_h ][ v_h]\ds
=
\int_\Omega fv_h \dx,
\end{equation}
for all $v_h\in V_h^D$. In fact, the limiting scheme (\ref{thm:2D_DG_scheme:BZ-limitni-TFEM-formulace-suma-pres-vrcholy}) is simply Babu\v{s}ka-Zl\'{a}mal DG (\ref{eq:Babuska-Zlamal_2D}) with trapezoidal quadrature applied to the edge integrals:
\begin{equation}
    \int_\Gamma [ u_h ][ v_h]\dx\approx \frac{|\Gamma|}{2} \big([ u_h ]_a[ v_h]_a+[ u_h ]_b[ v_h]_b\big),
\end{equation}
where $a,b\in e(\Gamma)$ are the endpoints of $\Gamma$.
\end{remark}

\begin{remark}[\bf Relation to WOPSIP]
\label{rem:WOPSIP}
The limiting scheme (\ref{thm:2D_DG_scheme:BZ-limitni-TFEM-formulace-suma-pres-vrcholy}) is also very similar to the \emph{weakly over-penalized symmetric interior penalty} (WOPSIP) method introduced in \cite{WOPSIP}. This scheme has the form
\begin{align}
\label{eq_WOPSIP}
\sum_{K\in\mct_h}\int_{K} \nabla u_h\cdot\nabla v_h \dx + 
\eta\sum_{\Gamma \in \mbe_h}\frac{1}{|\Gamma|^3}\int_\Gamma \Pi_\Gamma^0[ u_h ]\Pi_\Gamma^0[v_h]\ds
=\int_\Omega fv_h \dx,
\end{align}
where $\Pi_\Gamma^0$ is the orthogonal projection from $L^2(\Gamma)$ onto $\Pol_0(\Gamma)$. 

Similarly as the TFEM-DG scheme (\ref{thm:2D_DG_scheme:BZ-limitni-TFEM-formulace-suma-pres-vrcholy}), WOPSIP can be viewed as a quadrature version of Babu\v{s}ka-Zl\'{a}mal DG (\ref{eq:Babuska-Zlamal_2D}). Namely, when restricted to the discrete space, the edge integrals in (\ref{eq:Babuska-Zlamal_2D}) are approximated by a midpoint quadrature formula in the WOPSIP scheme. Indeed, if $v\in \Pol_1(\Gamma)$, we have $\Pi_\Gamma^0 v= v(\frac{a+b}{2})$, i.e. the value at the midpoint, hence the midpoint formula approximation of the edge integral is
\begin{equation}
\int_\Gamma [ u_h ][ v_h]\dx \approx |\Gamma| [ u_h ]_{\frac{a+b}{2}}[ v_h]_{\frac{a+b}{2}} = \int_\Gamma \Pi_\Gamma^0[ u_h ]\Pi_\Gamma^0[v_h]\ds,
\end{equation}
which is the WOPSIP jump penalization term.

To sum up, the TFEM-DG scheme (\ref{thm:2D_DG_scheme:BZ-limitni-TFEM-formulace-suma-pres-vrcholy}) is equivalent to Babu\v{s}ka-Zl\'{a}mal DG with trapezoidal quadrature, while WOPSIP is equivalent to Babu\v{s}ka-Zl\'{a}mal DG with midpoint quadrature (on the discrete space). Both methods also require the same penalization of (at least) $h^{-3}$, as in (\ref{thm:2D_DG_scheme:BZ-limitni-TFEM-formulace-suma-pres-vrcholy}) and Theorem~\ref{thm:main}.
\end{remark}

\begin{remark}[\bf Implementation]
\label{rem_implementation_2D}
The method (\ref{thm:2D_DG_scheme:BZ-limitni-TFEM-formulace-suma-pres-vrcholy}) is a mathematical interpretation of the limiting scheme. However,  the implementation is much simpler in practice, and follows exactly the TFEM implementation from page  \pageref{Implementation}: Take your FEM code, choose $D$ at least $\sim h^{-3}$, cf. Theorem~\ref{thm:main},  and limit the element Jacobian away from zero by $J_{\min}$ obtained from $D$ on degenerate elements. The only difference in 2D, as opposed to 1D is the relation between $J_{\min}$ and $D$. Similarly as in (\ref{1D_implementation:2}), the introduction of a threshold $J_{\min}$  introduces the factor $J/J_{\min}$ in the element contribution to the mass matrix, cf. \cite{quiriny2024temperedfiniteelementmethod}. In the derivation of the limiting DG scheme, we locally modify the diffusion on edge elements of $\mct_h^\beta$ to be $\beta D$, therefore necessarily
\begin{equation}
\label{rem:implem_2D:1}
    \frac{J}{J_{\min}}=\beta D.
\end{equation}
Now in 2D, we have $J\sim \beta h$ (Jacobian of the mapping onto the reference element), which reduces (\ref{rem:implem_2D:1}) to $J_{\min}\sim\frac{h}{D}$.

In $\R^d$ the same reasoning (volume of `dummy' face simplexes) gives us $J_{\min}\sim h^{d-1}/D$, or $J_{\min}\lesssim h^{d+2}$ due to Theorem~\ref{thm:main}. This has already been incorporated into the implementation `algorithm' on page \pageref{Implementation}.
\end{remark}

\begin{figure}
    \centering
    \includegraphics[width=0.3\linewidth]{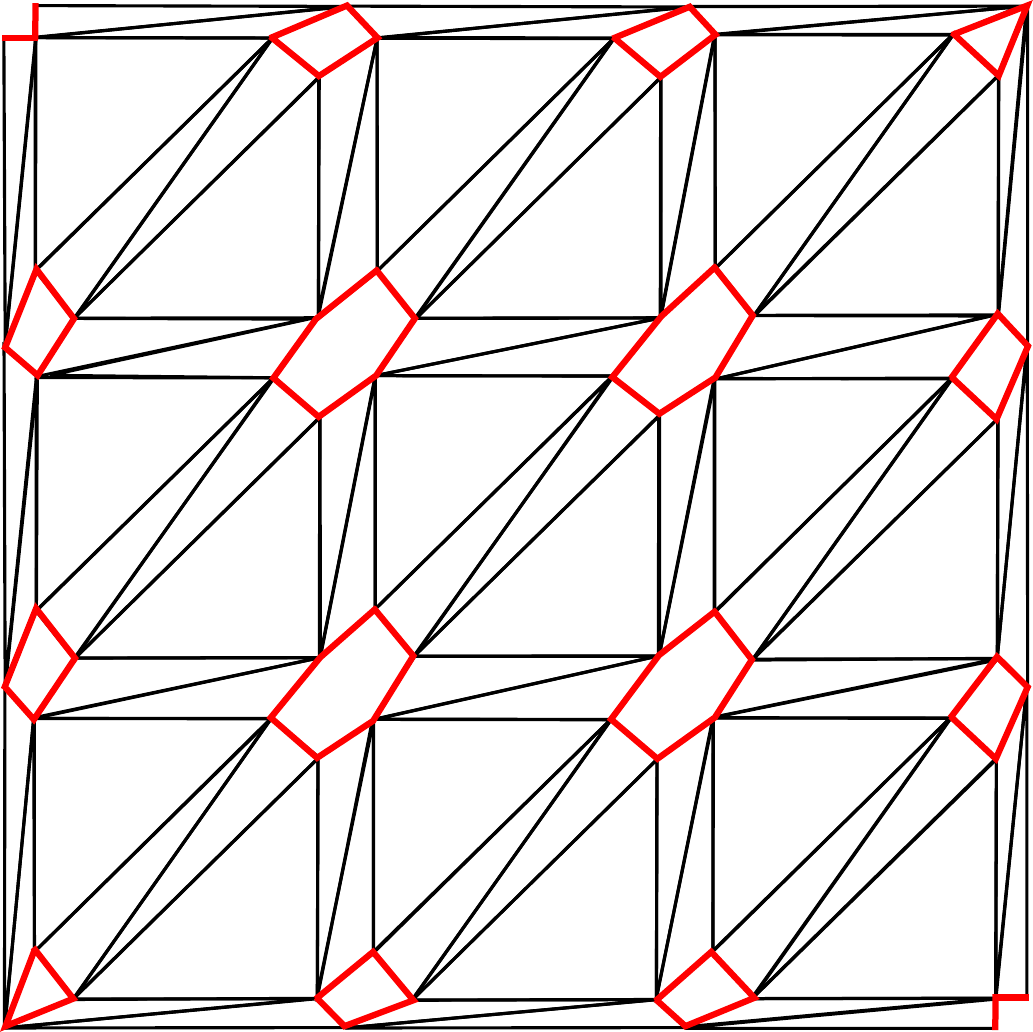}
    \caption{Mesh with vertex elements replaced by `holes' with Neumann boundaries.}
    \label{fig:new-triangulation-neumann}
\end{figure}

\begin{remark}[\bf General diffusion coefficient]
If we consider more generally $-\mathrm{\nabla}\cdot(\alpha(x) \nabla u)$ instead of the basic Laplacian, the analogue of (\ref{2D:mod_eq}) would simply be
\begin{equation}
\nonumber
\int_\Omega \mathcal{D}\left(x\right)\alpha(x) \nabla u_h \cdot \nabla v_h \dx = \int_\Omega fv_h \dx.
\end{equation}
\end{remark}

\begin{remark}[\bf Avoiding vertex elements altogether]
Since the vertex elements $\mce_V$ completely `disappear' from the limiting formulation (\ref{thm:2D_DG_scheme:BZ-limitni-TFEM-formulace-suma-pres-vrcholy}), one can simply avoid them from the start. One can simply \emph{not mesh} the small hexagonal `holes' at the vertices of the original mesh. In our example regular mesh, the resulting triangulation looks like Figure~\ref{fig:new-triangulation-neumann}. Instead of meshing the hexagonal `holes' in the mesh, we apply zero Neumann boundary conditions on their boundaries, i.e. on the red edges in Figure~\ref{fig:new-triangulation-neumann}. Imposing a zero Neumann condition on an edge essentially means that this edge has zero contribution to the resulting scheme, exactly like the vertex elements $\mce_V$. Thus one obtains the same formulation (\ref{thm:2D_DG_scheme:BZ-limitni-TFEM-formulace-suma-pres-vrcholy}) but with less computational effort -- we do not waste resources to compute the zero contributions of vertex elements.
\end{remark}

\begin{remark}[\bf General Dirichlet boundary conditions]
For simplicity we considered only homogeneous Dirichlet boundary conditions in (\ref{rce:slaba-formulace}). However, general Dirichlet conditions are easily incorporated into the scheme. In either case, we introduce a `layer' of degenerate edge elements along $\partial\Omega$, cf. Figures~\ref{fig:2D_triangulation_FEM_to_DG}~and~\ref{fig:new-triangulation-neumann}. If we prescribe the Dirichlet condition $u_D$ on $\partial\Omega$ and incorporate it into the finite element space $V_h^\beta$, then these boundary edge elements will effectively impose the Dirichlet condition by boundary jump penalization, similarly as edge elements in the interior of $\Omega$ impose continuity by penalization.
\end{remark}

\begin{proof}[Proof of Theorem~\ref{thm:2D_DG_scheme}] We distinguish the individual element types:

\textbf{1. Thick elements:} 
We begin with the integral $(I_1)$ from (\ref{eq:FEM_2D_beta_3_integrals}). If $K_T^\beta\in \mce_T$, the integral over $K_T^\beta$ is equal to the corresponding integral of the limiting functions over the limiting element $K\in\mct_h$. To see this, consider that $K_T^\beta$ is simply a homothetic scaling of $K$ by a factor $\lambda_\beta<1$ which satisfies $\lim_{\beta\to 0}\lambda_\beta=1$. If we transform the integral from $K_T^\beta$ to $K$, the area  scales like $\lambda_\beta^2$, while gradients scale like $1/\lambda_\beta$:
    \begin{equation}
    \label{eq:limit_beta_I1}
        \int_{K_T^\beta} \nabla\varphi_j^\beta\cdot\nabla\varphi_k^\beta \dx = \int_{K}  (\lambda_\beta^{-1}\nabla\varphi_j) \cdot(\lambda_\beta^{-1}\nabla\varphi_k) \lambda_\beta^{2} \dx = \int_{K}  \nabla\varphi_j \cdot\nabla\varphi_k \dx.
    \end{equation}
Here it is important that $\varphi_j^\beta$ is a simple transformation of $\varphi_j$ by the homothety. For other operators than the Laplacian, or in 3D, the factors $\lambda_\beta$ will not cancel out and some power of $\lambda_\beta$ will remain in the right-hand side of (\ref{eq:limit_beta_I1}). However, since $\lambda_\beta\to 1$, this factor vanishes in the limit and the integral over  $K_T^\beta$ converges to that over $K$. 

\textbf{2. Vertex elements:} If $K_V^\beta\in \mce_V$ then its area is $|K_V^\beta|\sim\beta^2$, the diffusion coefficient is $\sim\beta$, and the magnitude of the gradients of the basis functions are $|\nabla\varphi_j^\beta|\sim 1/\beta$. Altogether the integral over $K\in\mce_V$ is $\mathcal{O}(\beta)$ and it therefore `disappears' from the limiting DG formulation as $\beta\to 0$:
    \begin{equation}
    \nonumber
        \int_{K_V^\beta} \beta D\nabla\varphi_j^\beta\cdot\nabla\varphi_k^\beta \dx = \mathcal{O}\bigg(\beta^2\beta \frac{1}{\beta}\frac{1}{\beta}\bigg) =\mathcal{O}(\beta)\longrightarrow 0.
    \end{equation}
More rigorously, one can consider that $K^V_\beta$ is a homothetic scaling (shrinking) of $K^V_{\beta_{\max}}$ by a factor $\mu_\beta\sim\beta\to 0$ and apply a scaling argument such as (\ref{eq:limit_beta_I1}). 

\textbf{3. Edge elements:} 
Choose an arbitrary $K_E\in\mce_E$. We label the vertices of $K_E$ as $A, B, C$, as in Figure~\ref{fig:edge-element}. Moreover, denote the foot of the altitude from vertex $A$ to side $BC$ as $A_0$. Now define the orthogonal unit vectors
$$
e_\parallel = \frac{B - C}{|B - C|},\quad e_\perp = \frac{A - A_0}{|A - A_0|},
$$
and denote the projections of $\nabla \varphi$ into these directions as
$$\nabla_\parallel \varphi = \left(\nabla \varphi \cdot e_\parallel\right) e_\parallel,\quad \nabla_\perp \varphi = \left(\nabla \varphi \cdot e_\perp\right) e_\perp.$$

\begin{figure}[!ht]
    \centering
    \includegraphics[width=0.7\textwidth]{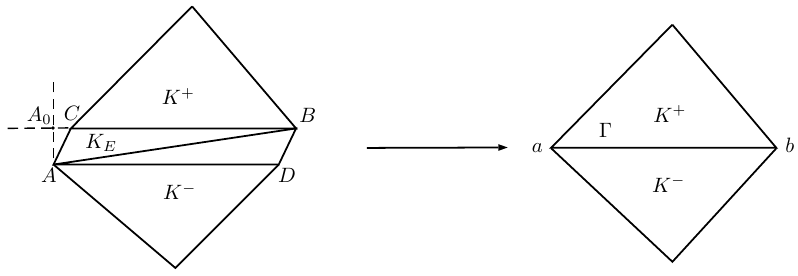}
    \caption{Edge element $K_E$ which degenerates to an edge $\Gamma$.}
    \label{fig:edge-element}
\end{figure}
Since $e_\parallel$ is orthogonal to $e_\perp$, we get (omitting the index $\beta$ for clarity):
$$\int_{K_E}\beta D\nabla\varphi_j\cdot \nabla\varphi_k\,dx = 
\int_{K_E}\beta D\nabla_\parallel\varphi_j\cdot \nabla_\parallel\varphi_k\,dx
+\int_{K_E}\beta D\nabla_\perp\varphi_j\cdot \nabla_\perp\varphi_k\,dx
=(A)+(B).$$
\noindent The integral (A) vanishes as $\beta \to 0+$:
\begin{equation}\nonumber
\begin{split}
\left|\text{(A)}\right| &\leq \int_{K_E}\beta D\left|\nabla_\parallel\varphi_j\right|
\left|\nabla_\parallel\varphi_k\right|\,dx =
\beta D\int_{K_E} \frac{\left|\varphi_j\left(B\right)-\varphi_j\left(C\right)\right|}{|BC|}\frac{\left|\varphi_k\left(B\right)-\varphi_k\left(C\right)\right|}{|BC|}\,dx \\
&\leq\beta D|K_E|\frac{1}{|BC|^2} = \beta D\frac{1}{2}\beta|BC|\frac{1}{|BC|^2}=\frac{D\beta^2}{2|BC|} \longrightarrow 0, \quad \beta \to 0+,
\end{split}
\end{equation}
since $|BC|$ is fixed, independent of $\beta$.

Now we show that the integral (B) gives rise to the DG penalization. Let $K^+, K^-$ be the two `thick' elements in $\mce_T$ neighboring the limiting edge $\Gamma$, to which $K_E$ degenerates, cf. Figure~\ref{fig:edge-element}. Since the point $A_0$ might lie outside the triangle $ABC$ (for obtuse $K_E$), in order to suitably express $\nabla\varphi_j$ we define the extension of a linear function from an element to $\R^2$: For  $K\in\mct_h^\beta$ and ${\varphi} \in\Pol_1(K)$ define $\widetilde{\varphi} \in\Pol_1(\R^2)$ such that $\restr{\widetilde{\varphi}}{K} = \restr{{\varphi}}{K}$. Moreover, let $\varphi_j^\pm =\restr{\varphi_j}{K^\pm}$. Since $\varphi_j$ is continuous, we have $\varphi_j(A) =\varphi_j^-(A)$, and also the extensions stay continuous: $\widetilde{\varphi}_j\left(A_0\right) =\widetilde{\varphi}_j^+\left(A_0\right)$. Thus
\begin{equation}
\nonumber
    \begin{split}
    \text{(B)} &= \beta D\int_{K_E} \frac{{\varphi}_j\left(A\right) - \widetilde{\varphi}_j\left(A_0\right)}{\beta} \frac{{\varphi}_k\left(A\right) - \widetilde{\varphi}_k\left(A_0\right)}{\beta} \,dx \\
&= \beta D\frac{1}{2}\beta|BC|\frac{{\varphi}_j^-\left(A\right) - \widetilde{\varphi}_j^+\left(A_0\right)}{\beta} \frac{{\varphi}_k^-\left(A\right) - \widetilde{\varphi}_k^+\left(A_0\right)}{\beta} \\
&=\frac{D}{2}|BC|\left({\varphi}_j^-\left(A\right) - \widetilde{\varphi}_j^+\left(A_0\right)\right)\left({\varphi}_k^-\left(A\right) - \widetilde{\varphi}_k^+\left(A_0\right)\right)
\xrightarrow{\beta \to 0+} \frac{D}{2}|\Gamma|[\varphi_j]_a[\varphi_k]_a,,        
    \end{split}
\end{equation}
where $a\in e(\Gamma)$ is the endpoint of the limiting edge $\Gamma$ such that $A,A_0,C\to a$. Similarly, by taking the other edge element $ABD$, cf. Figure~\ref{fig:edge-element}, and taking the limit, we get the jump term at the second endpoint $b\in e(\Gamma)$: $\tfrac{1}{2}D|\Gamma|[\varphi_j]_b[\varphi_k]_b$. 

Thus, by summing over all elements $K_E\in \mce_E$, we get the limit of $(I_2)$ from (\ref{eq:FEM_2D_beta_3_integrals}): 
\vspace{-4mm}
\begin{equation}
\nonumber
(I_2)\longrightarrow  
\sum_{\Gamma \in \mbe_h}\sum_{\{a,b\}= e(\Gamma)} \frac{D}{2} |\Gamma|\big([ \varphi_j ]_a[ \varphi_k]_a +[ \varphi_j ]_b[ \varphi_k]_b\big) 
=\sum_{\Gamma \in \mbe_h}\sum_{a\in e(\Gamma)} \frac{D}{2} |\Gamma|[ \varphi_j ]_a[ \varphi_k]_a,
\end{equation}
which after summing over $j$, along with the expression $u_h = \sum_{j=1}^N U_j\varphi_j$, gives the penalization term from (\ref{thm:2D_DG_scheme:BZ-limitni-TFEM-formulace-suma-pres-vrcholy}).
\end{proof}

\section{Error estimates}
\label{sec:Error_estimates}
Now we analyze the limiting DG scheme (\ref{thm:2D_DG_scheme:BZ-limitni-TFEM-formulace-suma-pres-vrcholy}). 


Let $h_{\max} > 0$ and let $I = (0, h_{\max}]$. Let $(\mct_h)_{h\in I}$ be a family of triangulations satisfying \emph{shape-regularity} and \emph{quasi-uniformity}: there exist constants $\kappa>0$ and $c_{qu}>0$ independent of $h$ such that
\begin{equation}
\label{shape-regular}
    h_K\leq \kappa\rho_K,\quad c_{qu}h\leq h_K,
\end{equation}
for all $K\in\mct_h, h\in (0, h_{\max}]$. Here $h_K$ is the maximal edge length of $K\in\mct_h$, $\rho_K$ its inradius, and
$h:=\max_{K\in\mct_h} h_K$ the global mesh size. 


For $k\in\N$ let $H^k(\mct_h) = \left\{v\in L^2(\Omega): \restr{v}{K}\in H^k(K), \forall K\in\mct_h\right\}$ be the \emph{broken Sobolev space} on $\mct_h$ equipped with the seminorm 
$|v|_{H^k(\mct_h)}^2:= \sum_{K\in\mct_h}|v|^2_{H^k(K)}$.
Furthermore, we define the $L^2(\mbe_h)$ and $L^\infty(\mbe_h)$ norms on the edges of $\mct_h$ as 
\begin{align*}
    \|v\|_{L^2(\mbe_h)}^2:= \sum_{\Gamma\in\mbe_h}\|v\|^2_{L^2(\Gamma)}, \quad \|v\|_{L^\infty(\mbe_h)}:= \max_{\Gamma\in\mbe_h}\|v\|_{L^\infty(\Gamma)}.
\end{align*}


We now present the main theorem. We will need $u \in H^2(\Omega) \cap W^{1, \infty}(\mbe_h)$, for which it suffices to have $u \in W^{2,p}(\Omega)$ with $p>2$ in 2D. In the previous sections we  assumed that $\Omega$ is the unit square for simplicity, in the analysis we can assume $\Omega$ to be a convex polygonal domain to ensure regularity of the solution to the dual problem. 

\begin{theorem}
\label{thm:main}
    Let $u \in W^{2,p}(\Omega)$ with $p>2$ be the weak solution satisfying~(\ref{rce:slaba-formulace}). Let $h_{\max} > 0$ and let $I = (0, h_{\max}]$. Let $(\mct_h)_{h\in I}$ be a family of triangulations satisfying the quasi-uniformity and shape-regularity assumptions (\ref{shape-regular}). Let $D\geq c_D/h^3$ for some $c_D$ independent of $h$. Then for $u_h \in V_h^D$, the DG solution  satisfying~(\ref{thm:2D_DG_scheme:BZ-limitni-TFEM-formulace-suma-pres-vrcholy}), there exist constants $C_1,C_2$ independent of $h$, such that the following error estimates hold:
    \begin{align}\label{ieq:final-H1-bound}
        |u-u_h|_{H^1(\mct_h)} + \sqrt{D}\|[ u-u_h ]\|_{L^2(\mbe_h)} &\leq
        C_1h, \\
        \|u-u_h\|_{L^2(\Omega)} &\leq C_2h^2.
    \end{align}
\end{theorem}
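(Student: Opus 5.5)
We follow the standard analysis of nonconsistent penalty methods, in the spirit of the WOPSIP analysis of \cite{WOPSIP}. Write the limiting scheme (\ref{thm:2D_DG_scheme:BZ-limitni-TFEM-formulace-suma-pres-vrcholy}) as $a_h(u_h,v_h)=(f,v_h)$ with
\[
a_h(w,v)=\sum_{K}\int_K\nabla w\cdot\nabla v\dx+\frac{D}{2}\sum_{\Gamma}\sum_{a\in e(\Gamma)}|\Gamma|[w]_a[v]_a ,
\]
and equip $V_h^D+W^{1,\infty}$-type functions with the energy norm $\|v\|_h^2=a_h(v,v)$. This is well defined for $u$ because $u\in W^{2,p}\subset C^1(\overline\Omega)$. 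On $V_h^D$ the trapezoidal jump sum is equivalent to $\|[v]\|^2_{L^2(\mbe_h)}$, since $[v]|_\Gamma\in\Pol_1(\Gamma)$. Thus $\|v\|_h^2\simeq |v|_{H^1(\mct_h)}^2+D\|[v]\|_{L^2(\mbe_h)}^2$, and $a_h$ is coercive on $V_h^D$ with constant $1$.

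The first step is a Strang-type splitting. Let $\Pi_h u\in V_h\subset V_h^D$ be the continuous Lagrange interpolant, so that $[\Pi_h u]=0$ and $[u]=0$. Then $\|u-\Pi_hu\|_h=|u-\Pi_hu|_{H^1}\lesssim h|u|_{H^2}$. For $\xi_h=u_h-\Pi_hu$ we have
\[
\|\xi_h\|_h^2=a_h(u_h-u,\xi_h)+a_h(u-\Pi_hu,\xi_h),
\]
and the second term is bounded by $Ch\|\xi_h\|_h$.

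The second step handles the consistency error $E(v_h):=(f,v_h)-a_h(u,v_h)$. Elementwise integration by parts, with $-\Delta u=f$, gives $E(v_h)=-\sum_\Gamma\int_\Gamma \partial_n u\,[v_h]\ds$, using $[u]=0$ and boundary jumps for boundary edges. We bound this by subtracting the edge mean of $\partial_nu$:
\[
\Big|\int_\Gamma \partial_nu[v_h]\Big|\le \Big|\int_\Gamma(\partial_nu-\Pi^0_\Gamma\partial_nu)[v_h]\Big|+\Big|\Pi^0_\Gamma\partial_nu\int_\Gamma[v_h]\Big| .
\]
We do not expect the mean-subtraction to save a power of $h$ in general, because $\int_\Gamma[v_h]$ is only controlled by the penalty. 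So we take the crude route: $|E(v_h)|\le \|\nabla u\|_{L^\infty(\mbe_h)}\,|\mbe_h|^{1/2}h^{1/2}\|[v_h]\|_{L^2(\mbe_h)}$. With $|\mbe_h|\sim h^{-2}$ this gives $|E(v_h)|\lesssim h^{-1/2}\|u\|_{W^{1,\infty}}\|[v_h]\|_{L^2(\mbe_h)}\le h^{-1/2}D^{-1/2}\|v_h\|_h\lesssim h\,\|v_h\|_h$ precisely when $D\gtrsim h^{-3}$. This is where the threshold on $c_D/h^3$ and the $W^{1,\infty}(\mbe_h)$ regularity enter. Combining the two steps yields $\|\xi_h\|_h\lesssim h$, and hence the first estimate (\ref{ieq:final-H1-bound}) by the triangle inequality.

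The third step is the $L^2$ estimate by an Aubin--Nitsche duality argument, which we expect to be the main obstacle. Let $z\in H^2\cap H^1_0$ solve $-\Delta z=u-u_h$ with $\|z\|_{H^2}\lesssim\|u-u_h\|_{L^2}$ by convexity. Integrating by parts gives $\|u-u_h\|^2=a_h(u-u_h,z)+\sum_\Gamma\int_\Gamma\partial_nz[u-u_h]$. Insert $\Pi_hz$ and use Galerkin orthogonality up to the consistency error: $a_h(u-u_h,\Pi_hz)=-E(\Pi_hz)=0$, since $\Pi_hz$ is continuous. The remaining term $a_h(u-u_h,z-\Pi_hz)$ contains no jumps of $z-\Pi_hz$, and is bounded by $Ch\cdot h\|z\|_{H^2}$. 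The dual edge term $\sum_\Gamma\int_\Gamma\partial_nz[u_h]$ is the delicate one. By the crude bound it gives $\|\nabla z\|_{L^2(\mbe_h)}\|[u_h]\|_{L^2(\mbe_h)}\lesssim h^{-1/2}\|z\|_{H^2}\cdot hD^{-1/2}\lesssim h^{2}\|z\|_{H^2}$, using the trace inequality $\|\nabla z\|^2_{L^2(\mbe_h)}\lesssim h^{-1}\|z\|^2_{H^1}+h\|z\|^2_{H^2}$ together with the bound $\sqrt D\|[u-u_h]\|\lesssim h$ from the $H^1$ step. Since $D\ge c_Dh^{-3}$ gives $\|[u_h]\|\lesssim h^{5/2}$, this suffices. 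Dividing by $\|u-u_h\|_{L^2}$ gives $C_2h^2$. The care needed is in verifying that every edge term is paired with a jump of the discrete solution, so that the strong penalty $D\sim h^{-3}$ compensates the $h^{-1/2}$ losses from counting edges.
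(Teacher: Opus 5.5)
Your proposal is correct and follows essentially the same route as the paper. Both arguments rest on the consistency identity for $u$ tested with discontinuous functions, the splitting through the continuous interpolant $\Pi_h u$, and the crude bound of the edge term using $\sum_{\Gamma}|\Gamma|\lesssim h^{-1}$ together with Lemma~\ref{lem:odhad-polynomu-na-hrane}, which is exactly where $D\gtrsim h^{-3}$ enters; the $L^2$ step in both is an Aubin--Nitsche argument with $\Pi_h\psi$ and the trace inequality of Lemma~\ref{lem:trace:ineq}. The only difference is cosmetic: you phrase the consistency error as a dual-norm (Strang-type) bound in the energy norm $a_h(\cdot,\cdot)^{1/2}$, whereas the paper applies Young's inequality with weight $D$ and absorbs half of the discrete jump term into the left-hand side.
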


\begin{remark}[\bf Local choice of $D$] 
In Theorem~\ref{thm:main}, we choose $D\ge c_Dh^{-3}$ based on the global mesh parameter $h$. In DG penalty schemes it is common to choose the parameter locally as $D\ge c_D|\Gamma|^{-3}$ on edge $\Gamma$. For quasi-uniform meshes this does not make much difference. The analysis is basically identical for the local choice of $D$.
\end{remark}

\begin{remark}[\bf Different exponents in $D$]
The condition on $D$ in the theorem is $D\ge c_Dh^{-3}$. Typically one chooses $D=c_D h^{-p}$ for some power $p$, as is the custom in DG schemes. Theorem~\ref{thm:main} then says that we must choose $p\ge 3$ in order to have optimal $H^1$ and $L^2$ error bounds. Numerical experiments suggest that indeed $p=3$ is the minimal admissible exponent for optimal convergence, cf. Section~\ref{sec:Num_exp}.
\end{remark}

\begin{remark}[\bf General $\R^d$ case]
The proof of Theorem~\ref{thm:main} remains valid in general spatial dimension $\R^d$ assuming quasi-uniform, quasi-regular meshes. The choice of $D\gtrsim h^{-3}$ is independent of $d$.      
\end{remark}

Before we can prove Theorem~\ref{thm:main}, we need some auxiliary results.

\begin{lemma}\label{lem:odhad-polynomu-na-hrane}
    Let $\Gamma\in\mbe_h$ be an edge with endpoints $a,b$, and let $f \in \Pol^1(\Gamma)$ be a first degree polynomial on $\Gamma$. Then 
    $$\int_\Gamma f^2(x)\ds \leq |\Gamma|\frac{f^2(a)+f^2(b)}{2}.$$
\end{lemma}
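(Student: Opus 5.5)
The plan is to compute the integral exactly and compare it with the trapezoidal value. Parametrize $\Gamma$ by arc length, $x(t)=a+t(b-a)$ with $t\in[0,1]$, so that $\ds=|\Gamma|\,\d t$. Since $f$ is linear on $\Gamma$, we can write $f(x(t))=(1-t)f(a)+tf(b)$. Set $\alpha=f(a)$ and $\gamma=f(b)$ for brevity.

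The integral of the square is then elementary:
\begin{equation*}
\int_\Gamma f^2\ds=|\Gamma|\int_0^1\big((1-t)\alpha+t\gamma\big)^2\,\d t=\frac{|\Gamma|}{3}\big(\alpha^2+\alpha\gamma+\gamma^2\big),
\end{equation*}
using $\int_0^1(1-t)^2\d t=\int_0^1t^2\d t=\tfrac13$ and $\int_0^1 t(1-t)\d t=\tfrac16$. Subtracting this from the trapezoidal value gives
\begin{equation*}
|\Gamma|\frac{\alpha^2+\gamma^2}{2}-\frac{|\Gamma|}{3}\big(\alpha^2+\alpha\gamma+\gamma^2\big)=\frac{|\Gamma|}{6}\big(\alpha^2-2\alpha\gamma+\gamma^2\big)=\frac{|\Gamma|}{6}(\alpha-\gamma)^2\ge 0,
\end{equation*}
which is exactly the claimed inequality.

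An alternative, calculus-free route is convexity. The function $s\mapsto s^2$ is convex, so $f(x(t))^2\le(1-t)\alpha^2+t\gamma^2$ pointwise. Integrating over $t$ gives the bound directly.

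There is no genuine obstacle here. The only point to watch is the normalization $\ds=|\Gamma|\d t$. The identity obtained also shows that equality holds exactly when $f$ is constant on $\Gamma$, which explains why the trapezoidal penalty in (\ref{thm:2D_DG_scheme:BZ-limitni-TFEM-formulace-suma-pres-vrcholy}) controls $\|[v_h]\|_{L^2(\mbe_h)}^2$ for $v_h\in V_h^D$; this will be used in the proof of Theorem~\ref{thm:main}.
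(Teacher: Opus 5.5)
Your proof is correct and is essentially the paper's argument, which cites either a direct calculation or convexity of $f^2$ with the Hermite--Hadamard inequality; these are exactly your two routes, and your exact identity with remainder $\frac{|\Gamma|}{6}\bigl(f(a)-f(b)\bigr)^2$ makes the direct one explicit. One cosmetic point: $x(t)=a+t(b-a)$ with $t\in[0,1]$ is a constant-speed parametrization rather than the arc-length one, but your $\mathrm{d}s=|\Gamma|\,\mathrm{d}t$ is correct for it.
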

\begin{proof} The inequality follows either from a simple direct calculation or by using the fact that $f^2$ is convex and applying the Hermite–Hadamard inequality.
\end{proof}

\begin{lemma}[Interpolation error, \cite{Ciarlet}]\label{lem:MAC}
    Let $K\in\mct_h$ be a shape-regular element in a quasi-uniform mesh $\mct_h$. Let $u\in H^2(K)$ and $\Pi_Ku$ be the \emph{linear Lagrange interpolation} of $u$ on $K$. Then there exists a constant $C_I$ independent of $h, u$ such that 
    \begin{align}\label{rce:MAC}
        |u - \Pi_Ku|_{H^1(K)} \leq C_Ih|u|_{H^2(K)}.
    \end{align}
\end{lemma}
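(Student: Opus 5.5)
The estimate is the classical local interpolation bound. I would prove it by the standard affine-scaling argument combined with the Bramble--Hilbert lemma. Only shape-regularity (\ref{shape-regular}) is actually needed; quasi-uniformity enters only when the local factor $h_K$ is replaced by the global $h$.

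\textbf{Step 1 (reference element).} Let $\hat K$ be the reference triangle and $F_K(\hat x)=B_K\hat x+b_K$ the affine bijection $\hat K\to K$. For $v$ on $K$ set $\hat v=v\circ F_K$. Affine maps send vertices to vertices and preserve $\Pol_1$, so Lagrange interpolation commutes with the pullback: $\widehat{\Pi_K u}=\hat\Pi\hat u$. Here $\hat\Pi$ is linear Lagrange interpolation on $\hat K$.

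\textbf{Step 2 (estimate on $\hat K$).} By the Sobolev embedding $H^2(\hat K)\hookrightarrow C(\overline{\hat K})$, valid for $d\le 3$, the vertex values are continuous functionals. Hence $\hat\Pi:H^2(\hat K)\to\Pol_1(\hat K)\subset H^1(\hat K)$ is bounded. Since $\hat\Pi p=p$ for every $p\in\Pol_1$, we have for every $p\in\Pol_1(\hat K)$
\[
\hat u-\hat\Pi\hat u=(I-\hat\Pi)(\hat u-p),
\]
and therefore $|\hat u-\hat\Pi\hat u|_{H^1(\hat K)}\le C\inf_{p\in\Pol_1}\|\hat u-p\|_{H^2(\hat K)}$. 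The Bramble--Hilbert lemma (the Deny--Lions quotient-norm estimate) bounds this infimum by $\hat C|\hat u|_{H^2(\hat K)}$.

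\textbf{Step 3 (scaling back).} By the chain rule and the change of variables,
\[
|v|_{H^1(K)}\le C\|B_K^{-1}\|\,|\det B_K|^{1/2}|\hat v|_{H^1(\hat K)},\qquad
|\hat v|_{H^2(\hat K)}\le C\|B_K\|^2|\det B_K|^{-1/2}|v|_{H^2(K)}.
\]
The matrix norms satisfy $\|B_K\|\le h_K/\hat\rho$ and $\|B_K^{-1}\|\le \hat h/\rho_K$. Combining these with Step 2 gives
\[
|u-\Pi_Ku|_{H^1(K)}\le C\frac{h_K^2}{\rho_K}|u|_{H^2(K)}\le C\kappa\,h_K|u|_{H^2(K)}\le C_I h|u|_{H^1(K)}\Big|_{H^1\to H^2},
\]
that is, $|u-\Pi_Ku|_{H^1(K)}\le C_I h|u|_{H^2(K)}$, with $C_I$ depending only on $\hat K$ and $\kappa$.

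\textbf{Main obstacle.} The only delicate point is Step 2: the well-posedness and boundedness of $\hat\Pi$ on $H^2$, which relies on point evaluations being continuous there. This holds in 2D and 3D but would fail for $d\ge4$, where one would need higher regularity or a quasi-interpolant instead. The rest is routine bookkeeping of the scaling constants.
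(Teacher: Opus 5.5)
Your proposal is correct and is the standard affine-scaling plus Bramble--Hilbert (Deny--Lions) argument from Ciarlet's book, which is all the paper relies on, since it gives no proof of its own beyond the citation. Two cosmetic points: the garbled end of your final display should simply read $C_I h|u|_{H^2(K)}$, and quasi-uniformity is not needed even to pass from $h_K$ to $h$, because $h_K\le h$ holds by definition.
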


\begin{lemma}[Trace inequality]
\label{lem:trace:ineq}
    Let $K \in \mct_h$ be an element and $\Gamma$ one of its sides. Then there exists a constant $C_{Tr} >0$ that depends on $\kappa$, such that for all $v \in H^2(K)$:
    \begin{align*}
    \bigg\|\frac{\partial v}{\partial n_\Gamma}\bigg\|^2_{L^2(\Gamma)} \leq C^2_{Tr}\big(|\Gamma|^{-1}|v|^2_{H^1(K)} + |\Gamma||v|^2_{H^2(K)}\big) \leq \frac{C^2_{Tr}\kappa}{2c_{qu}} h^{-1}\|v\|^2_{H^2(K)}.
\end{align*}
\end{lemma}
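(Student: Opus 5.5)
The plan is to prove the first inequality with the standard multiplicative/scaled trace inequality on a simplex, applied to the normal derivative. The second inequality then follows from shape-regularity and quasi-uniformity (\ref{shape-regular}).

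\textbf{Step 1 (scalar trace inequality).} I will first show that for every $w\in H^1(K)$,
\begin{equation*}
\|w\|^2_{L^2(\Gamma)}\le C\big(|\Gamma|^{-1}\|w\|^2_{L^2(K)}+|\Gamma|\,|w|^2_{H^1(K)}\big),
\end{equation*}
with $C$ depending only on $\kappa$. To get it, map $K$ affinely onto the reference triangle $\hat K$ via $F_K(\hat x)=B_K\hat x+b_K$, with $\Gamma$ going to a fixed reference edge $\hat\Gamma$. On $\hat K$, the trace theorem gives $\|\hat w\|_{L^2(\hat\Gamma)}^2\le \hat C\|\hat w\|^2_{H^1(\hat K)}$. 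Transforming back involves three factors:
\begin{itemize}
\item the edge measure scales as $|\Gamma|/|\hat\Gamma|$;
\item the area scales as $|\det B_K|\sim h_K^2$;
\item $|\hat w|_{H^1(\hat K)}^2\le \|B_K\|^2|\det B_K|^{-1}|w|^2_{H^1(K)}$, with $\|B_K\|\lesssim h_K$.
\end{itemize}
Shape-regularity gives $|\det B_K|\gtrsim\rho_K^2\gtrsim h_K^2/\kappa^2$. It also gives $|\Gamma|\ge 2\rho_K\ge 2h_K/\kappa$, and trivially $|\Gamma|\le h_K$, so $|\Gamma|\sim h_K$ with constants depending only on $\kappa$. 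Collecting these factors yields the weights $|\Gamma|^{-1}$ on the $L^2$ term and $|\Gamma|$ on the $H^1$ term. (Alternatively, one can avoid the reference map and use the divergence-theorem argument with the vector field $x-P$, where $P$ is the vertex opposite $\Gamma$. This gives the same inequality with explicit constants involving $|K|/|\Gamma|$, which is $\sim\rho_K$.)

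\textbf{Step 2 (apply to the normal derivative).} For $v\in H^2(K)$ set $w=\partial v/\partial n_\Gamma=\nabla v\cdot n_\Gamma$. This lies in $H^1(K)$ because $n_\Gamma$ is constant on $\Gamma$ (and we extend it as a constant vector to $K$). Pointwise $|w|\le|\nabla v|$ and $|\nabla w|\le|D^2v|$, so $\|w\|_{L^2(K)}^2\le|v|^2_{H^1(K)}$ and $|w|^2_{H^1(K)}\le|v|^2_{H^2(K)}$. Inserting these into Step 1 and renaming the constant $C_{Tr}^2:=C$ gives the first inequality.

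\textbf{Step 3 (global $h$ form).} By (\ref{shape-regular}), $|\Gamma|\ge 2\rho_K\ge 2h_K/\kappa\ge 2c_{qu}h/\kappa$, hence $|\Gamma|^{-1}\le \frac{\kappa}{2c_{qu}}h^{-1}$. For the second term we need $|\Gamma|\le\frac{\kappa}{2c_{qu}}h^{-1}$, i.e.\ $|\Gamma|h\le\kappa/(2c_{qu})$. Since $|\Gamma|\le h\le h_{\max}$, $\kappa\ge 2\sqrt3$ (the inradius of any triangle satisfies $\rho_K\le h_K/(2\sqrt3)$) and $c_{qu}\le1$, this holds whenever $h_{\max}^2\le\sqrt3$. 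That is the case for the unit square, where $h\le\sqrt2$. In general the mismatch can be absorbed into $C_{Tr}$, a harmless enlargement. Summing then gives the bound by $\frac{C_{Tr}^2\kappa}{2c_{qu}}h^{-1}(|v|^2_{H^1(K)}+|v|^2_{H^2(K)})\le\frac{C_{Tr}^2\kappa}{2c_{qu}}h^{-1}\|v\|^2_{H^2(K)}$.

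The only genuinely delicate point is Step 1: tracking the scaling so that the constant depends on $\kappa$ alone and not on $K$. Everything else is bookkeeping with (\ref{shape-regular}).
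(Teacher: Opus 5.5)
Your proof is correct in substance but more self-contained than the paper's. The paper does not prove the first inequality; it cites inequality (4.7) of Arnold, Brezzi, Cockburn and Marini. You derive it instead, from the scaled scalar trace inequality (via the reference map, or the divergence identity with the field $x-P$) applied to $w=\nabla v\cdot n_\Gamma$. That is the standard proof of the cited estimate, and your bookkeeping showing the constant depends only on $\kappa$ is right. For the second inequality the paper uses the same bound $|\Gamma|^{-1}\le\kappa/(2c_{qu}h)$ from $2\rho_K\le|\Gamma|$. It then disposes of the term $|\Gamma|\,|v|^2_{H^2(K)}$ by writing $|\Gamma|\le|\Gamma|^{-1}$, i.e.\ it tacitly assumes $|\Gamma|\le 1$. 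By treating that term separately you make explicit what the paper hides: this inequality is not scale invariant and needs a bound on $h_{\max}$. Your condition $h_{\max}\le 3^{1/4}$ is slightly less restrictive than the paper's implicit one.

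Two claims in your Step 3 are wrong, however.

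\emph{The unit-square claim.} $h\le\sqrt2$ only gives $h^2\le 2$, and $2>\sqrt3$. So your argument does not yet cover every triangulation of the unit square.

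\emph{Absorbing into $C_{Tr}$.} After dividing by $C_{Tr}^2$, the second inequality reads $|\Gamma|^{-1}|v|^2_{H^1(K)}+|\Gamma|\,|v|^2_{H^2(K)}\le\frac{\kappa}{2c_{qu}}h^{-1}\|v\|^2_{H^2(K)}$, in which $C_{Tr}$ does not appear. It genuinely fails for highly oscillatory $v$ (e.g.\ $v=\sin(kx_1)$, $k\to\infty$) whenever $|\Gamma|h>\kappa/(2c_{qu})$. Enlarging $C_{Tr}$ only rescues the composite bound $\|\partial v/\partial n_\Gamma\|^2_{L^2(\Gamma)}\le C h^{-1}\|v\|^2_{H^2(K)}$. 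Admittedly, that composite bound is all the $L^2$ estimate uses.

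To repair the chained statement, you have two options:
\begin{enumerate}
\item Impose $h_{\max}\le 3^{1/4}$ explicitly.
\item For $\Omega=[0,1]^2$, take the element $K^*$ with $h_{K^*}=h$ and let $s_{K^*}$ be its semiperimeter. Then $\rho_{K^*}h<\rho_{K^*}s_{K^*}=|K^*|\le\frac12$. This gives $\kappa>2h^2$, hence $|\Gamma|h\le h^2<\kappa/2\le\kappa/(2c_{qu})$.
\end{enumerate}
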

\begin{proof}
    The first inequality can be found e.g. in \cite{arnold-et-al-unified-analysis-of-dg-for-eliptic-problems} (inequality (4.7)). To get the second inequality, we note that from elementary geometry of the triangle $K$, we have $2\rho_K\leq|\Gamma|$. Therefore,
    \begin{equation}
    \nonumber
        \frac{1}{|\Gamma|}\leq\frac{1}{2\rho_K}\leq \frac{\kappa}{2h_K}\leq \frac{\kappa}{2c_{qu}h},
    \end{equation}
 and thus
\begin{equation}
\nonumber
    C^2_{Tr}\big(|\Gamma|^{-1}|v|^2_{H^1(K)} + |\Gamma||v|^2_{H^2(K)}\big)\leq C^2_{Tr}|\Gamma|^{-1}\|v\|^2_{H^2(K)}\leq C^2_{Tr} \frac{\kappa}{2c_{qu}h}\|v\|^2_{H^2(K)}.
\end{equation}
\end{proof}

\begin{proof}[Proof of Theorem~\ref{thm:main}, $H^1$-estimates]
    In (\ref{thm:2D_DG_scheme:BZ-limitni-TFEM-formulace-suma-pres-vrcholy}), we have discontinuous test functions,  which we cannot simply insert into the weak form (\ref{rce:slaba-formulace}), we get a consistency error. Since $u \in W^{2,p}(\Omega), p>2$, the solution $u$ satisfies the strong form
\begin{align}\label{rce:silna-formulace-laplace}
    -\Delta u = f,\quad \text{ a.e. in } \Omega
\end{align}
and that the traces $\nabla u \cdot n \in L^\infty(\Gamma)$ on edges $\Gamma$. Multiplying (\ref{rce:silna-formulace-laplace}) by a test function $v_h \in V_h^D$, integrating over $K\in\mct_h$, using Green's theorem, and summing over all $K\in\mct_h$, we conclude that $u$ satisfies:
\begin{align}
\label{rce:presne-reseni-testovane-funkci-z-Vh}
    \sum_{K\in\mct_h}\int_{K} \nabla u \cdot \nabla v_h \dx - \sum_{\Gamma\in \mbe_h}\int_{\Gamma} (\nabla u \cdot n_\Gamma) [ v_h ] \dx = \int_\Omega fv_h \dx.
\end{align}
Subtracting (\ref{thm:2D_DG_scheme:BZ-limitni-TFEM-formulace-suma-pres-vrcholy}) from (\ref{rce:presne-reseni-testovane-funkci-z-Vh}) and using $[u_h]=[u_h-u]$ (continuity of $u$), we get:  
\begin{align}\label{rce:rozdil-presneho-a-priblizneho-reseni-prvni-krok}
\begin{split}    
    0 = \sum_{K\in\mct_h}\int_{K} \left(\nabla u - \nabla u_h\right) \cdot \nabla v_h \dx - \sum_{\Gamma\in \mbe_h}\int_{\Gamma} (\nabla u \cdot n_\Gamma) [ v_h ]\ds \\+ \frac{D}{2}\sum_{\Gamma\in\mbe_h}\sum_{a\in e(\Gamma)} |\Gamma|[ u - u_h ]_a [ v_h ]_a.
\end{split}
\end{align}
Denote $\xi_h = \Pi_hu-u_h$, where $\Pi_hu\in V_h$ is the continuous piecewise linear Lagrange interpolation of $u$ on $\mct_h$. Set $v_h := \xi_h\in V_h^D$. Introduce $\Pi_hu$ and $\nabla\Pi_hu$ in (\ref{rce:rozdil-presneho-a-priblizneho-reseni-prvni-krok}):
\begin{equation}
\nonumber
    \begin{split}
        0 = &\sum_{K\in\mct_h}\int_{K} \Big(\nabla u - \nabla\Pi_hu + \underbrace{\nabla \Pi_hu - \nabla u_h}_{\nabla\xi_h}\Big) \cdot \nabla \xi_h \dx - \sum_{\Gamma\in \mbe_h}\int_{\Gamma} \nabla u \cdot n_\Gamma [ \xi_h ] \ds \\&+
        \frac{D}{2}\sum_{\Gamma\in\mbe_h}\sum_{a\in e(\Gamma)}|\Gamma| [ u - \Pi_hu + \underbrace{\Pi_hu - u_h}_{\xi_h} ]_a [ \xi_h ]_a,
    \end{split}
\end{equation}
which is equivalent to (the continuity of $u-\Pi_hu$ implies $|\Gamma|[ u-\Pi_hu ]_a[ \xi_h ]_a=0$):
\begin{multline}\label{rce:rozdil-presneho-a-priblizneho-reseni-treti-krok}
    |\xi_h|_{H^1(\mct_h)}^2 +
    \frac{D}{2}\sum_{\Gamma\in\mbe_h}\sum_{a\in e(\Gamma)}|\Gamma| [ \xi_h ]^2_a \\=
    \sum_{\Gamma\in\mbe_h}\int_\Gamma (\nabla u\cdot n_\Gamma)[ \xi_h ] \ds  
    - \sum_{K\in\mct_h}\int_{K}(\nabla u - \nabla \Pi_h u) \cdot \nabla\xi_h\dx =: (a) +(b),
\end{multline}

\textbf{Estimate for $\mathbf{(a)}$:} Using the Cauchy-Schwarz inequality, Young's inequality $ab \leq \frac{1}{2D}a^2+\frac{D}{2}b^2$, and Lemma~\ref{lem:odhad-polynomu-na-hrane} respectively:
\begin{align*}
    \text{(a)} &\leq \sum_{\Gamma\in\mbe_h}\int_\Gamma |\nabla u|\big|[ \xi_h ]\big| \ds \leq
    \sum_{\Gamma\in\mbe_h}\int_\Gamma \frac{1}{2D}|\nabla u|^2 + \frac{D}{2}[ \xi_h ]^2 ds \\
    &\leq \sum_{\Gamma\in\mbe_h}\frac{|\Gamma|}{2D}\|\nabla u\|_{L^\infty(\Gamma)}^2 + \sum_{\substack{\Gamma\in\mbe_h\\ \Gamma=(a,b)}}\frac{D}{2}\frac{|\Gamma|}{2}\big([ 
\xi_h ]_a^2 + [ \xi_h ]_b^2\big)\\ 
    &= \frac{1}{2D}\|\nabla u\|_{L^\infty(\mbe)}^2
    \sum_{\Gamma\in\mbe_h}|\Gamma| + \frac{D}{4}\sum_{\Gamma\in\mbe_h}\sum_{a\in e(\Gamma)}|\Gamma|[ \xi_h ]_a^2.
\end{align*}
Now we bound $\sum_\Gamma|\Gamma|$. We use the basic triangle identity $|K|=\tfrac{1}{2}\rho_K|\partial K|$ (area = inradius $\times$ semiperimeter) and the shape-regularity and quasi-uniformity (\ref{shape-regular}):
\begin{align}
\sum_{\Gamma\in\mbe_h}|\Gamma|
\le\sum_{K\in\mct_h}|\partial K|
=\sum_{K\in\mct_h}\frac{2|K|}{\rho_K}
\le 2\kappa\sum_{K\in\mct_h}\frac{|K|}{h_K}
\le\frac{2\kappa}{c_{\rm qu}\,h}\sum_{K\in\mct_h}|K|
=\frac{2\kappa}{c_{\rm qu}}\frac{|\Omega|}{h}.
\end{align}

\textbf{Estimate for $\mathbf{(b)}$:} We use the Cauchy-Schwarz inequality, Young's inequality, and Lemma~\ref{lem:MAC} respectively:
\begin{align}
\label{eq:sum_gamma}
    \text{(b)} &\leq
    \sum_{K\in\mct_h}\int_{K}|\nabla u - \nabla \Pi_h u||\nabla\xi_h|\dx \leq
    \sum_{K\in\mct_h}\int_{K}\frac{1}{2} |\nabla u - \nabla \Pi_h u|^2+\frac{1}{2}|\nabla\xi_h|^2\dx\\&\leq
    \frac{1}{2}C_I^2h^2|u|_{H^2(\Omega)}^2 + \frac{1}{2}|\xi_h|_{H^1(\mct_h)}^2.
\end{align}

Applying bounds (a) and (b) to (\ref{rce:rozdil-presneho-a-priblizneho-reseni-treti-krok}), and using (\ref{eq:sum_gamma}), we get:
\begin{align}
\label{ieq:odhad-z-rozdilu-presneho-a-priblizneho-reseni}
    \frac{1}{2}|\xi_h|_{H^1(\mct_h)}^2\! +\!
    \frac{D}{4}\!\sum_{\Gamma\in\mbe_h}\sum_{a\in e(\Gamma)}\!|\Gamma| [ \xi_h ]^2_a \leq
    \frac{1}{2}C_I^2h^2|u|_{H^2(\Omega)}^2\!+\!\frac{\kappa}{c_{\rm qu}}\frac{|\Omega|}{Dh} \|\nabla u\|_{L^\infty(\mbe)}^2.
\end{align}
Note that by taking $D \geq c_D/h^3$, we get an $\mathcal{O}(h^2)$ estimate of the right-hand side.

Finally, the desired error estimates are obtained by triangle inequalities:
\begin{align*}
    &|u-u_h|_{H^1(\mct_h)}^2 + D\|[ u-u_h ]\|_{L^2(\mbe_h)}^2 \\
    &\leq
    2|u-\Pi_hu|_{H^1(\mct_h)}^2
    +2|\xi_h|_{H^1(\mct_h)}^2
    +2D\|[ u-\Pi_hu ]\|_{L^2(\mbe_h)}^2
    +2D\|[ \xi_h ]\|_{L^2(\mbe_h)}^2.
\end{align*}
The first term is be bounded using Lemma~\ref{lem:MAC}. The third term is zero due to the continuity of $u-\Pi_hu$. The fourth term is estimated using Lemma~\ref{lem:odhad-polynomu-na-hrane}, and (\ref{ieq:odhad-z-rozdilu-presneho-a-priblizneho-reseni}) is applied to the $\xi$-terms. After taking the square root, we get the desired error estimate.
\end{proof}

\begin{proof}[Proof of Theorem~\ref{thm:main}, $L^2$-estimates]
We now proceed with the derivation of the error bounds in the $L^2$-norm using the Aubin-Nitsche `trick'. Since we have $u - u_h \in L^2(\Omega)$ and $\Omega$ is convex polygonal, there exists $\psi \in H^2(\Omega) \cap H^1_0(\Omega)$ such that:
\begin{align}\label{rce:adjoint_problem}
    - \Delta \psi = u-u_h,\quad\text{a.e. in } \Omega, \quad \psi = 0 \,\text{ on }\partial\Omega.
\end{align}

\noindent Moreover, for such $\psi$, we have the following a priori estimate:
\begin{align}\label{ieq:regularity-bound}
    \|\psi\|_{H^2(\Omega)} \leq C_{reg}\|u-u_h\|_{L^2(\Omega)}.
\end{align}

Taking $v \in H^2(\mct_h)$, multiplying the strong form in (\ref{rce:adjoint_problem}), integrating over $K\in \mct_h$, using Green's theorem, and summing over all $K\in\mct_h$, we get:
\begin{align}
    \sum_{K\in\mct_h}\int_K \nabla \psi \cdot \nabla v   \dx - \sum_{\Gamma \in \mbe_h} \int_\Gamma (\nabla\psi \cdot n_\Gamma)[ v]\ds = \int_\Omega (u-u_h)v\dx.\label{rce:adjoint-solution-tested-by-discontinuous-test-function}
\end{align}

Let $\Pi_h\psi \in V_h$ be the continuous piecewise linear Lagrange interpolation of $\psi$ on $\mct_h$. Taking $\Pi_h\psi$ as a test function in both (\ref{thm:2D_DG_scheme:BZ-limitni-TFEM-formulace-suma-pres-vrcholy}) and  (\ref{rce:presne-reseni-testovane-funkci-z-Vh}), respectively, gives us
\begin{align}
    \sum_{K\in\mct_h}\int_K \nabla (u-u_h) \cdot \nabla \Pi_h\psi \dx = 0,\label{temp:orthogonality-to-interpolation}
\end{align}
since $[ \Pi_h\psi ] = [ \Pi_h\psi ]_a = 0$.

\noindent Now setting $v=u-u_h \in H^2(\mct_h)$ in (\ref{rce:adjoint-solution-tested-by-discontinuous-test-function}) and using (\ref{temp:orthogonality-to-interpolation}) gives:
\begin{align}
\label{ieq:L2estimate-beginning}
    \begin{split}
        &\|u-u_h\|^2_{L^2(\Omega)} = \sum_{K\in\mct_h}\int_K \nabla \psi \cdot \nabla(u-u_h)\dx - \sum_{\Gamma\in\mbe_h}\int_\Gamma (\nabla\psi \cdot n_\Gamma)[ u - u_h ] \ds\\ 
        &\ =\sum_{K\in\mct_h}\int_K \nabla (\psi - \Pi_h\psi) \cdot \nabla(u-u_h)\dx - \sum_{\Gamma\in\mbe_h}\int_\Gamma (\nabla\psi \cdot n_\Gamma)[ u - u_h ] \ds\\
        &\ \leq \underbrace{\sum_{K\in\mct_h} |\psi - \Pi_h\psi|_{H^1(K)}|u - u_h|_{H^1(K)}}_{(c)} + \underbrace{\sum_{\Gamma\in\mbe_h} \|[ u - u_h ]\|_{L^2(\Gamma)}\bigg(\int_\Gamma |\nabla\psi \cdot n_\Gamma|^2\ds\bigg)^{1/2}}_{(d)}.
    \end{split}
\end{align}
For the first term: We can use Lemma~\ref{lem:MAC} to estimate $|\psi - \Pi_h\psi|_{H^1(K)}$. For $|u - u_h|_{H^1(K)}$, we can use (\ref{ieq:final-H1-bound}), assuming that $D \geq c_D / h^3$. Finally, we use the a priori estimate (\ref{ieq:regularity-bound}). Altogether we get:
\begin{align}\nonumber
    \begin{split}
        &(c)\leq \bigg(\sum_{K\in\mct_h}|\psi - \Pi_h\psi|_{H^1(K)}^2\bigg)^{1/2}\bigg(\sum_{K\in\mct_h}|u - u_h|^2_{H^1(K)}\bigg)^{1/2}  \\ 
        &\ \leq \bigg(C_I^2h^2\sum_{K\in\mct_h}|\psi|^2_{H^2(K)}\bigg)^{1/2}C_{1}h = C_{1}C_Ih^2|\psi|_{H^2{(\Omega)}} \leq C_{reg}C_{1}C_Ih^2\|u-u_h\|_{L^2(\Omega)},
    \end{split}
\end{align}
\noindent where $C_{1} > 0$ is the constant independent of $h$ from the right-hand side of (\ref{ieq:final-H1-bound}).

For the second term in (\ref{ieq:L2estimate-beginning}), we again use (\ref{ieq:final-H1-bound}) and Lemma~\ref{lem:trace:ineq}: 
\begin{align}
\nonumber
    \begin{split}
        &(d)\leq \frac{1}{\sqrt{D}}\bigg(D\sum_{\Gamma\in\mbe_h} \|[ u - u_h ]\|^2_{L^2(\Gamma)}\bigg)^{1/2}\bigg(\sum_{K\in\mct_h}\sum_{\Gamma \subseteq K}\left\|\frac{\partial\psi}{\partial n_\Gamma}\right\|^2_{L^2(\Gamma)}\bigg)^{1/2}\\
        &\ \leq \frac{1}{\sqrt{D}}C_{1}h\bigg(\sum_{K\in\mct_h}\sum_{\Gamma\subseteq K}\frac{C^2_{Tr}\kappa}{2c_{qu}} h^{-1}\|\psi\|^2_{H^2(K)}\bigg)^{1/2} \\ 
        &\ \leq\frac{1}{\sqrt{c_D/h^3}}C_{1}hC_{Tr}\sqrt{\frac{3\kappa}{2c_{qu}}} h^{-1/2}\|\psi\|_{H^2(K)} \leq C_{1}C_{Tr}\sqrt{\frac{3\kappa}{2c_{qu}c_D}} C_{reg}h^{2}\|u-u_h\|_{L^2(\Omega)}.
    \end{split}
\end{align}
Altogether, plugging the previous two estimates into (\ref{ieq:L2estimate-beginning}) gives us:
\begin{align*}
    \|u-u_h\|^2_{L^2(\Omega)}\leq h^2\|u-u_h\|_{L^2(\Omega)} \bigg(C_{reg}C_{1}C_I + C_{1}C_{Tr}\sqrt{\frac{3\kappa}{2c_{qu}c_D}} C_{reg}\bigg).
\end{align*}

\noindent Dividing both sides by $\|u-u_h\|_{L^2(\Omega)}$, we get the desired $O(h^2)$ estimate.
\end{proof}

\section{Numerical experiments}
\label{sec:Num_exp}
In this section we test the TFEM-DG scheme on problems in 2D and 3D to demonstrate its performance and the sharpness of the theory from Section~\ref{sec:Error_estimates}. \textbf{All the codes are available in the supplementary materials, namely a Gmsh plugin for producing the meshes with dummy elements and the TFEM-DG implementation for the Laplacian in Python.} 

The implementation was performed by following the guideline from page \pageref{Implementation}: (i) transform the mesh by adding zero-measure dummy elements at inter-element interfaces; (ii) put a threshold $J_{\min}\sim h^p,$ $p\ge{d+2},$ on the minimal element Jacobian when computing gradients, element areas, etc., on degenerate elements  in $\R^d$. Therefore (almost) all of the coding work is done on the meshing side which can be completely decoupled from the trivially modified FEM code. The resulting systems were solved with the \emph{PARDISO}~\cite{pardiso} direct solver to avoid sensitivity to iterative solver choices and settings. We note that this is by no means necessary, and conjugate gradients with simple preconditioning performs very well on the resulting systems, similarly as in the case of the original TFEM solver \cite{quiriny2024temperedfiniteelementmethod}. The meshes $\mct_h$ were generated by \emph{Gmsh}~\cite{gmsh}, and then adapted to TFEM-DG by inserting the flat edge dummy elements along edges of $\mct_h$ using the Gmsh plugin provided in the supplementary materials. We used an in-house basic FEM code which was adapted to TFEM-DG by the simple strategy presented on page \pageref{Implementation} (i.e. thresholding near-zero Jacobians to $J_{\min}$). 

\subsection{Optimality of the penalization scaling in 2D}
Theorem~\ref{thm:main} gives a lower bound  $D\gtrsim h^{-3}$ (equivalently $J_{\min}\lesssim h^{4}$ in $\R^2$) in order to get optimal error estimates in $H^1(\mct_h)$ and $L^2(\Omega)$ for the TFEM-DG scheme, i.e. Babu\v{s}ka-Zl\'{a}mal DG with trapezoidal quadrature. Here we test the sharpness of the exponent by taking $J_{\min}=h^{p}$ (equivalently $D=h^{p-1}$) for $p\in \{2.5, 2.8, 3.0, 3.5, 4.0\}$, and plot the resulting convergence curves, cf. Figure~\ref{fig:conv_2D}. Namely, we solve Poisson's problem on the unit square $[0,1]^2$ with the manufactured solution $u(x,y)=\cos(2\pi x)\sin(2\pi x)$. We consider uniform meshes of the type depicted in Figure~\ref{fig:2D_triangulation_FEM_to_DG}. The manufactured solution was used to set Dirichlet boundary conditions and high order quadratures on elements of full measure were used to compute the approximation error.

The results from Figure~\ref{fig:conv_2D} indicate that Theorem~\ref{thm:main} is sharp in the $L^2$-norm, i.e. that $p=4$ is the smallest exponent in $J_{\min}=h^{p}$ giving $\mathcal{O}(h^2)$ convergence in $L^2$. In the $H^1(\mct_h)$-seminorm, it seems that one gets $\mathcal{O}(h)$ convergence even for $p=3$, even though one gets suboptimal $\mathcal{O}(h)$ convergence in $L^2$. We are unable to explain this observation theoretically. Even so, $p=4$ still remains the smallest observed exponent for which one has optimal convergence in both $H^1(\mct_h)$ and $L^2(\Omega)$.

In Figure~\ref{fig:conv_2D_2}, we present convergence curves for $J_{\min}=h^{p}$ with larger $p$, namely $p\in \{4.0, 5.0, 6.0, 7.0\}$. As predicted by the theory, we have optimal convergence rates in all these cases, however for overly large exponents (6.0 and 7.0), and small $h$, the value of $J_{\min}$ nears machine epsilon, which leads to a degradation of the solution. Taking $h=5\cdot 10^{-3}$ and $p=7$ is clearly an overshoot ($J_{\min}=7.8\cdot 10^{-17}$).

In Figure~\ref{fig:2D_fine_mesh}, we present the result of a computation of TFEM-DG computation for the considered problem on a fine mesh with with 811{,}734 degrees of freedom. In this case the mesh is quasi-uniform, but not regular, to demonstrate that the regular right-angled structure is not necessary, cf. the mesh detail in Figure~\ref{fig:2D_fine_mesh}.

\begin{figure}
    \centering
    \includegraphics[width=0.49\textwidth]{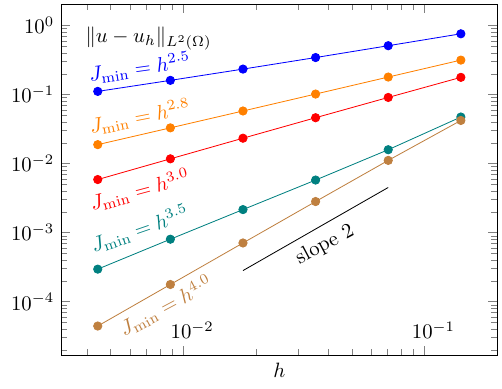}
    \includegraphics[width=0.49\textwidth]{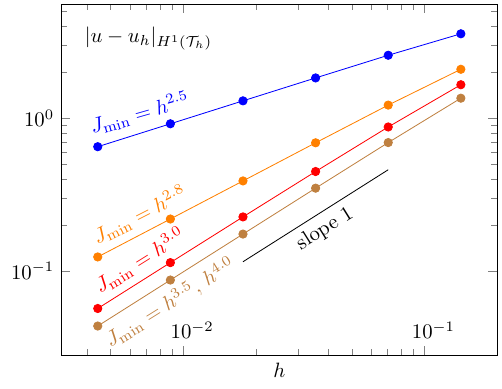}
    \caption{Convergence of TFEM-DG in 2D for various $J_{\min}$ (or equivalently $D$). For $J_{\min} =h^{4.0}$ ($D=h^{-3.0}$) we get  optimal convergence  in both $H^1(\mct_h)$ and $L^2(\Omega)$.}
    \label{fig:conv_2D}
    \includegraphics[width=0.49\textwidth]{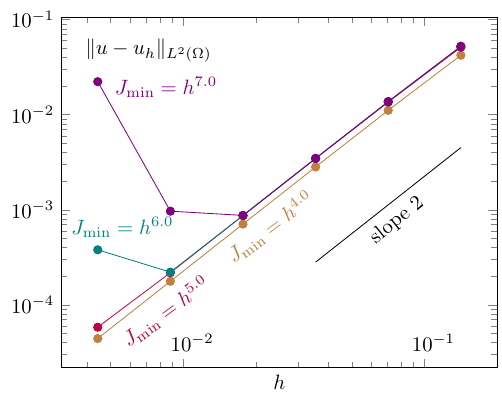}
    \includegraphics[width=0.49\textwidth]{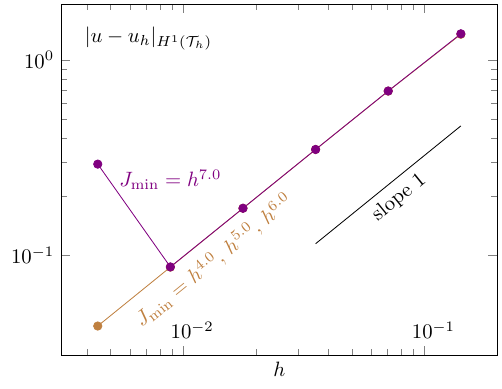}
    \caption{Convergence for $J_{\min}$ lower than $h^{4.0}$. Optimal convergence is preserved, as the theory states. However, some problems arise when $J_{\min}$ nears machine precision.}
    \label{fig:conv_2D_2}
\end{figure}

\begin{figure}
    \centering
    \includegraphics[width=0.4\linewidth]{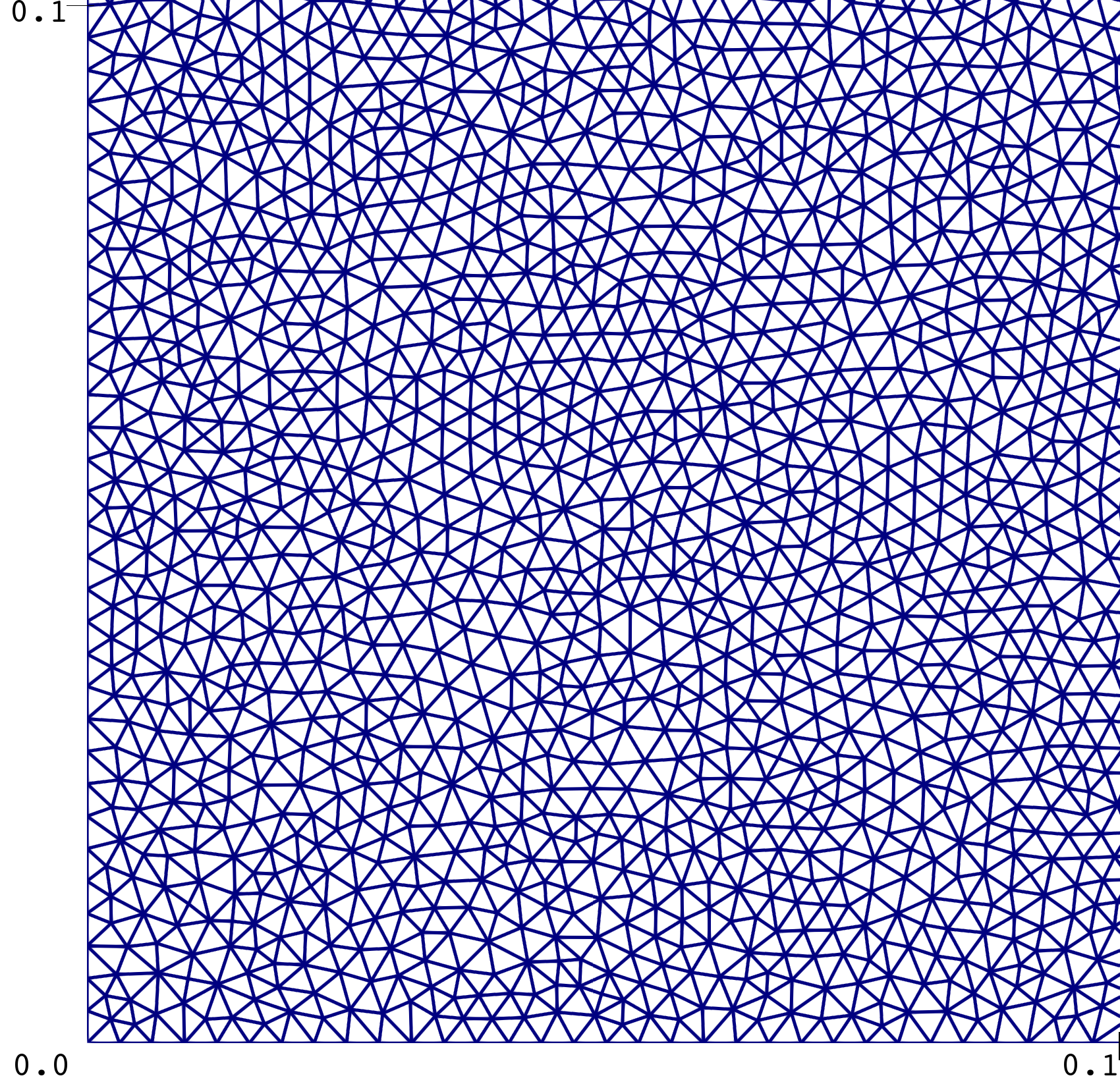}
    \hspace{0.02\linewidth}
    \includegraphics[width=0.4\linewidth]{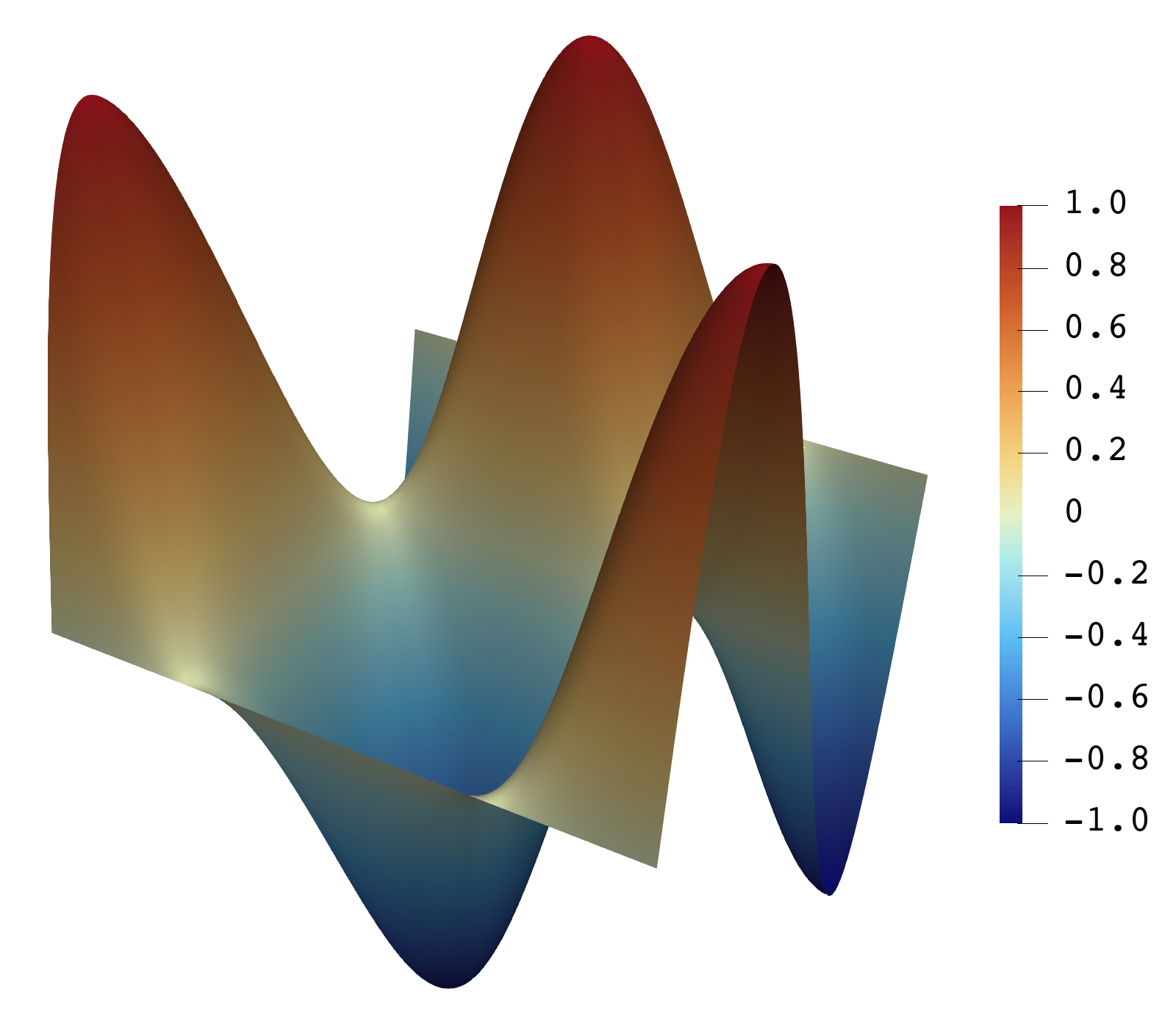}
    \caption{Left: zoomed image of fine unstructured mesh of $[0, 1]^2$. Image is magnified in such a way that the corner $[0, 0.1]^2$ is visible. Right: TFEM-DG solution on this mesh. The computation involved 811{,}734 degrees of freedom.}
    \label{fig:2D_fine_mesh}
\end{figure}

\subsection{Optimality of the penalization scaling in 3D}
Here we extend the results from the previous test case to 3D. Namely, we solve Poisson's problem on the unit cube $[0,1]^3$ with the manufactured solution $u(x,y)=\cos(2\pi x)\sin(2\pi y)\cos(2\pi z)$. Unlike the 2D case, we consider quasi-uniform meshes that are not exactly regular, cf. Figure~\ref{fig:3D_cube}. Figure~\ref{fig:3D_face_tetrahedra} shows the exact configuration of three vanishing face tetrahedra which are inserted between two regular tetrahedra. These dummy face tetrahedra have zero thickness in the actual TFEM-DG computation, however, they are depicted with small non-zero thickness in Figure~\ref{fig:3D_cube} for the sake of visualization. 

Figure~\ref{fig:conv_3D} shows the actual convergence results. In 3D, theory still dictates $D\sim h^{-3}$ as the threshold for optimality. This corresponds to $J_{\min}=h^{5}$ in $\R^3$. We tested $J_{\min}=h^{p}$ (equivalently $D=h^{p-2}$) for $p\in \{3.5, 3.8, 4.0, 4.5, 5.0\}$. Again, the results indicate that we get optimal convergence in both $H^1(\mct_h)$ and $L^2$ for  $J_{\min}=h^{5}$. The slowdown of convergence in the $L^2$-norm for $p=4.5$ is not as prominent as in 2D (for $p=3.5$), however it is still visible. We also seem to get $\mathcal{O}(h)$ convergence in $H^1(\mct_h)$ for the lower exponent $p=4.5$, for $p=4.0$ the situation is not clear.

We also test convergence for larger exponents $p$ in $J_{\min}=h^{p}$, namely $p\in \{5.0, 6.0, 7.0, 10.0\}$, cf. Figure~\ref{fig:conv_3D_2}. As predicted by the theory, we get optimal convergence rates in both $H^1(\mct_h)$ and $L^2(\Omega)$. Unlike in 2D, the slowdown of convergence for extreme exponents ($p=10$) was not observed.

\subsection{Element-wise switching between FEM and DG}
Since the implementation of DG is now done on the mesh level by introducing artificial interface elements, we can choose to do so only in parts of $\Omega$. Namely, we can choose for each edge whether or not to insert these elements, effectively choosing whether or not any given element should be DG or FEM. This is easy to implement based on a simple FEM/DG flag. We present such an example in Figure~\ref{fig:2D_DG-FEM_front_propagation}. There we have a circular front moving through $\Omega$, which gradually switches elements from continuous FEM to DG. This is purely a meshing issue and is completely problem independent! In order to make the transition from FEM to DG visible in Figure~\ref{fig:2D_DG-FEM_front_propagation}, we deliberately take a smaller value of $D=h^{-2}$ to weaken the jump penalization and have more visible jumps in the solution. Also, we only plot the solution on the full-measure elements of $\mct_h$ and omit the vertical segments corresponding to the linear solution on the flat elements, effectively only plotting the true DG part of the solution -- unlike Figure~\ref{fig:1D} in 1D, where we kept the vertical lines to demonstrate that the DG solution comes from FEM. Finally, we display the degenerate edge elements with nonzero thickness in the meshes from Figure~\ref{fig:3D_cube} for the sake of visualization. In the computation itself, these edge elements have area exactly zero.

\begin{figure}
    \centering
    \includegraphics[width=\linewidth]{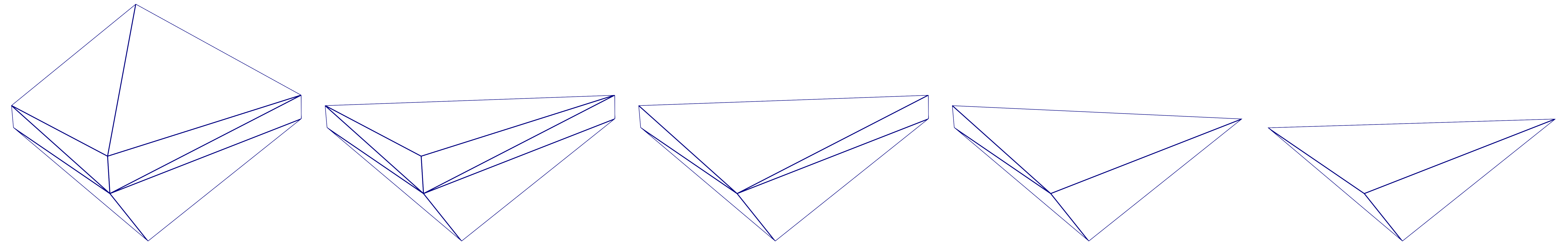}
    \caption{Illustration of how three artificial interface elements (tetrahedra) can be inserted between two full-measure tetrahedra to implement TFEM-DG.}
    \label{fig:3D_face_tetrahedra}
\end{figure}

\begin{figure}
    \centering
    \includegraphics[width=0.8\linewidth]{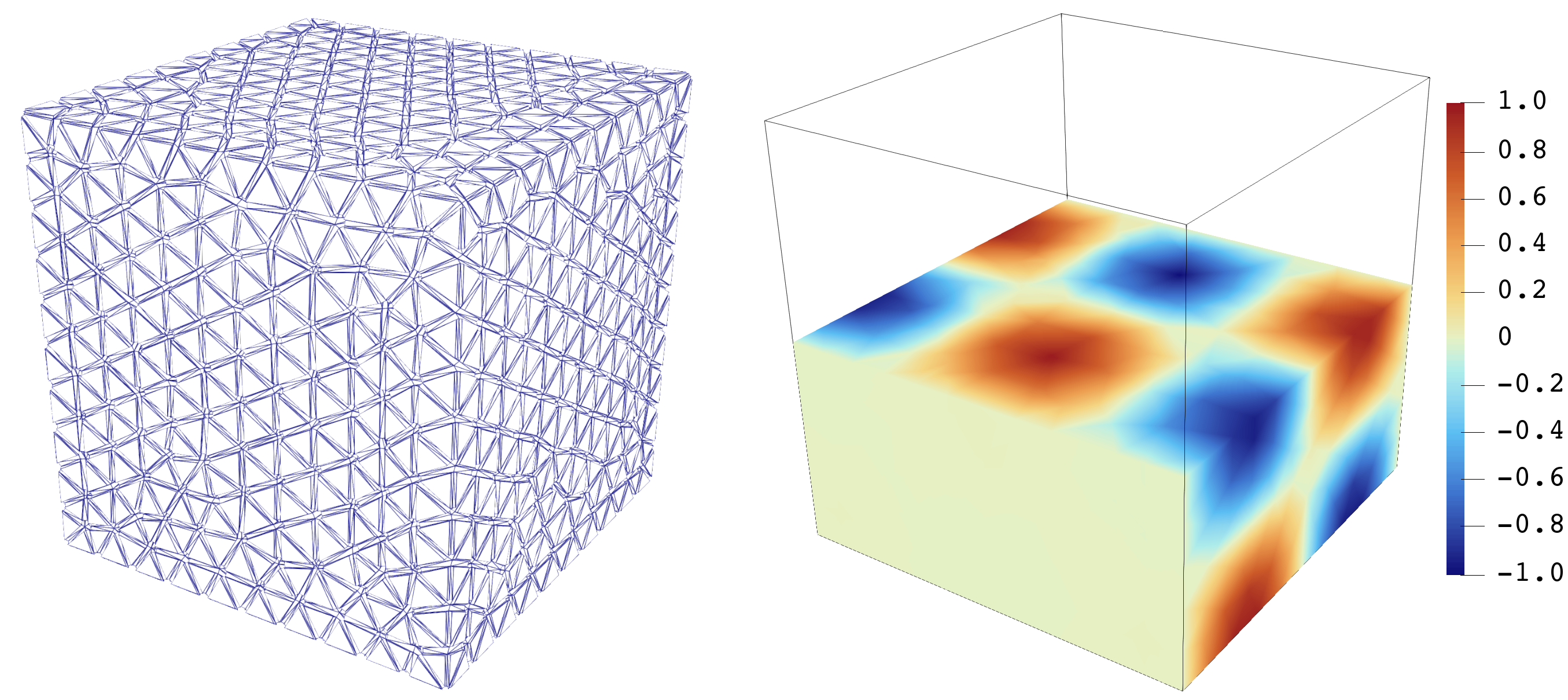}
    \caption{TFEM-DG solution for a problem with the exact solution
    $u(x,y) = \cos(2\pi x)\sin(2\pi y)\cos(2\pi z)$. We set $J_{\min} = h^{5.0}$, i.e. $D = h^{-3.0}$. `Dummy' face tetrahedra are depicted with nonzero thickness for the sake of visualization.}
    \label{fig:3D_cube}
\end{figure}

\begin{figure}
    \centering
    \includegraphics[width=0.49\linewidth]{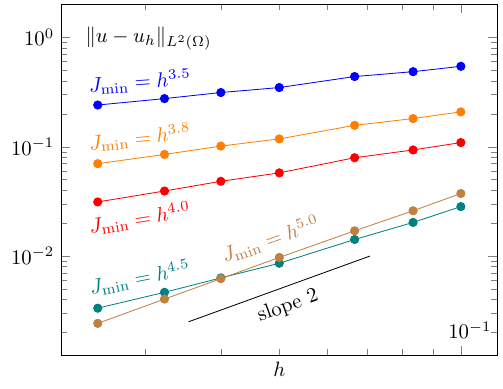}
    \includegraphics[width=0.479\linewidth]{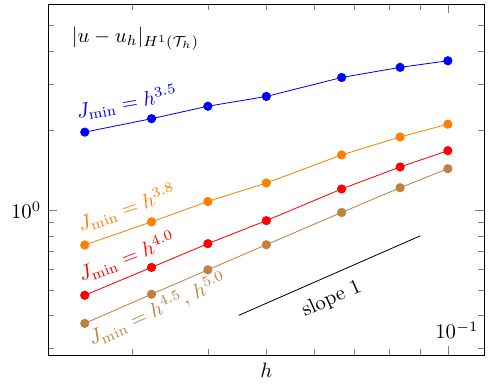}
    \caption{Convergence behavior of TFEM-DG in 3D for various values of $J_{\min}$ (or equivalently $D$). For $J_{\min} =h^{5.0}$ ($D=h^{-3.0}$) we obtain optimal convergence rates.}
    \label{fig:conv_3D}
    
    \includegraphics[width=0.49\linewidth]{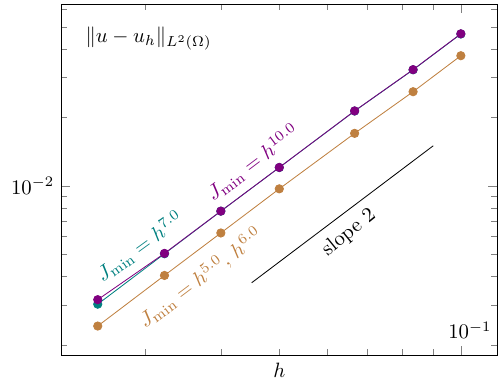}
    \includegraphics[width=0.479\linewidth]{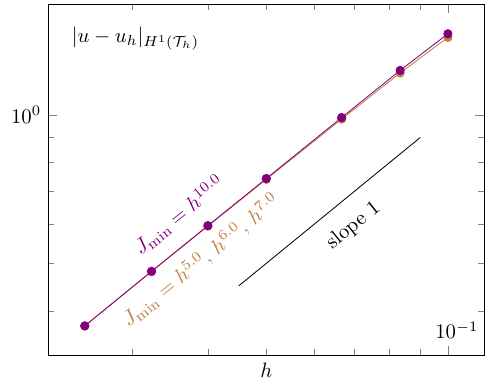}
    \caption{Convergence behavior for values of $J_{\min}$ that are lower than $h^{5.0}$. The order is preserved, as the theory states.}        
    \label{fig:conv_3D_2}
\end{figure}



\begin{figure}
    \centering
    \includegraphics[width=0.25\linewidth]{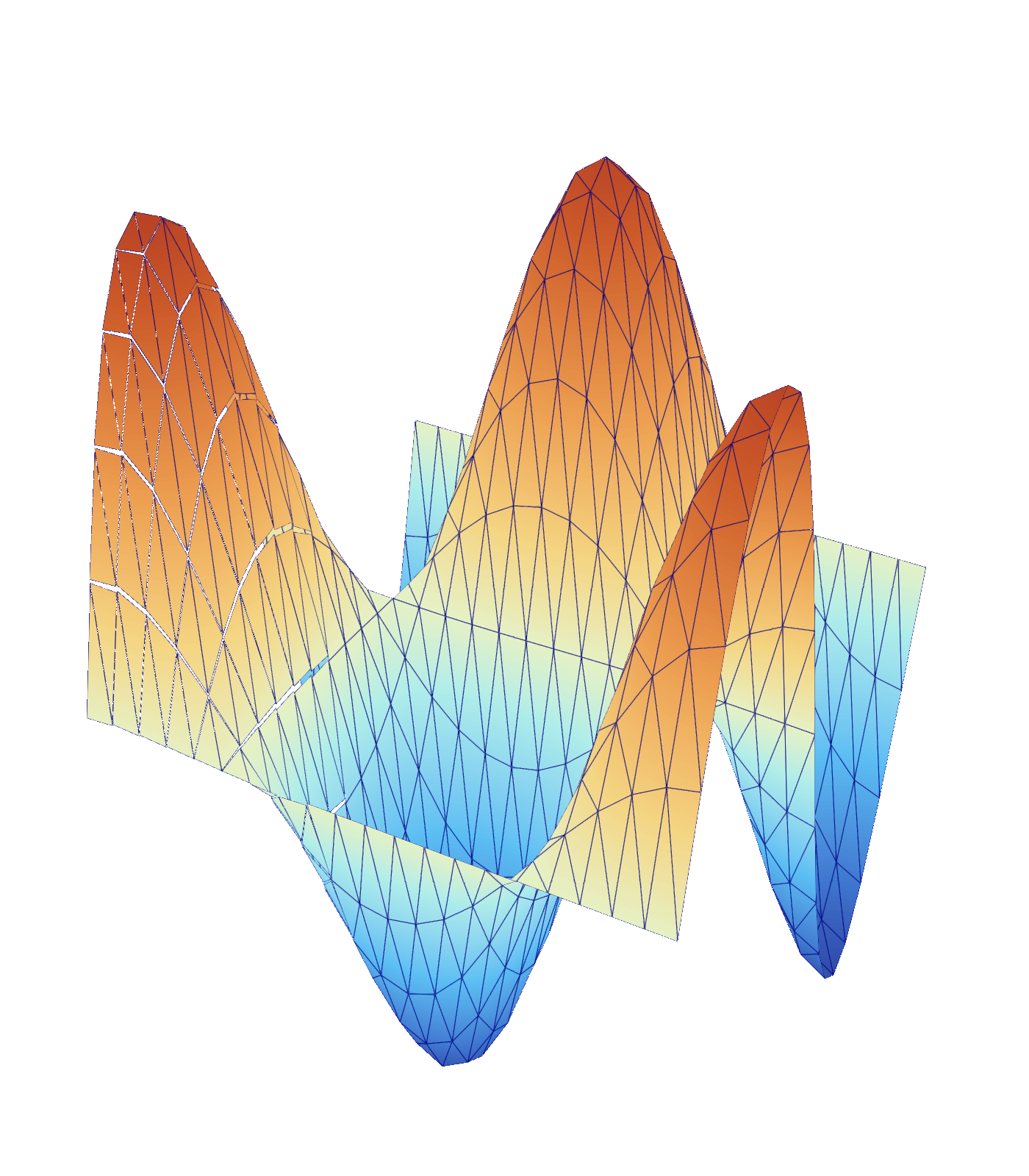}
    \includegraphics[width=0.25\linewidth]{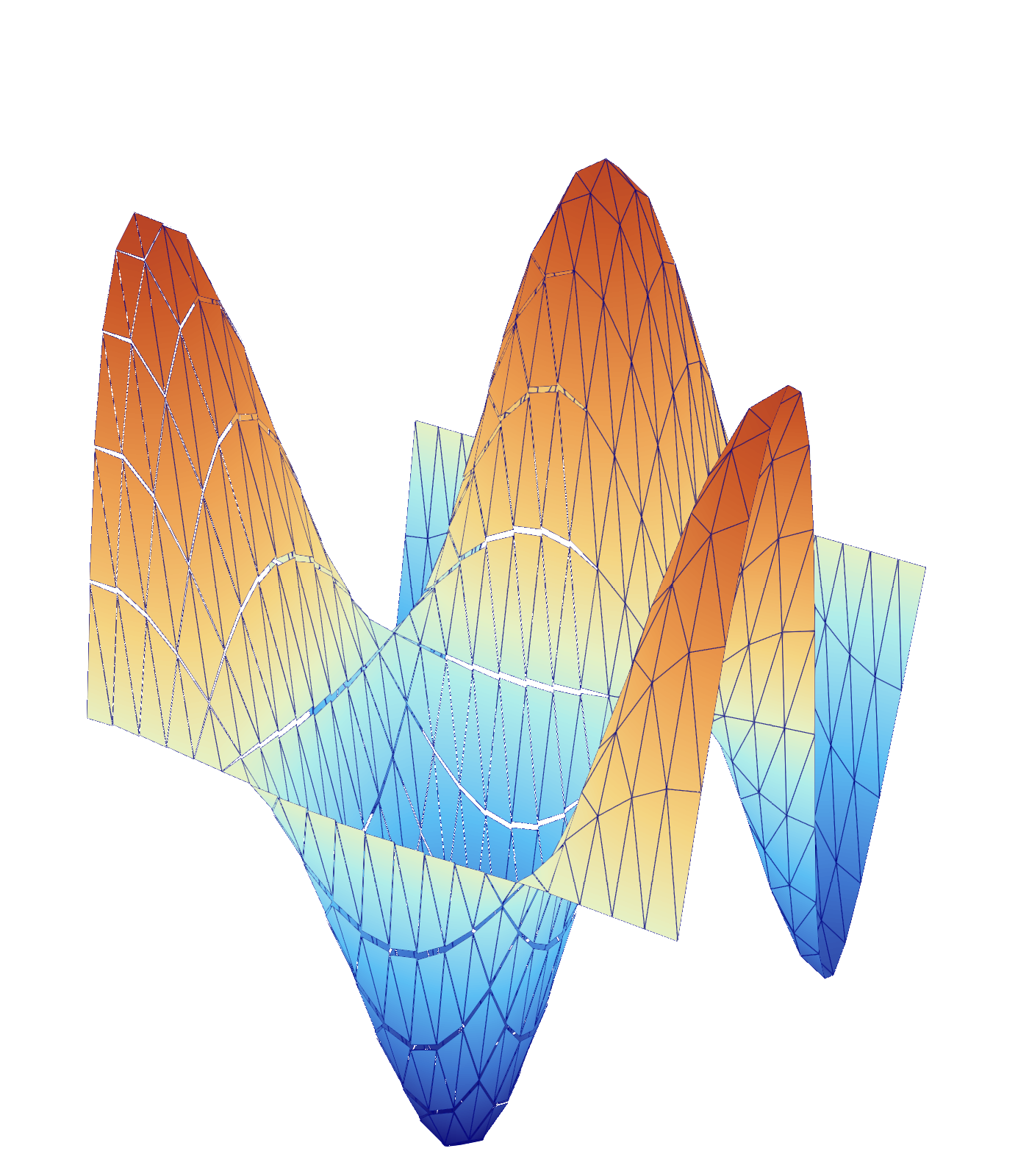}
    \includegraphics[width=0.25\linewidth]{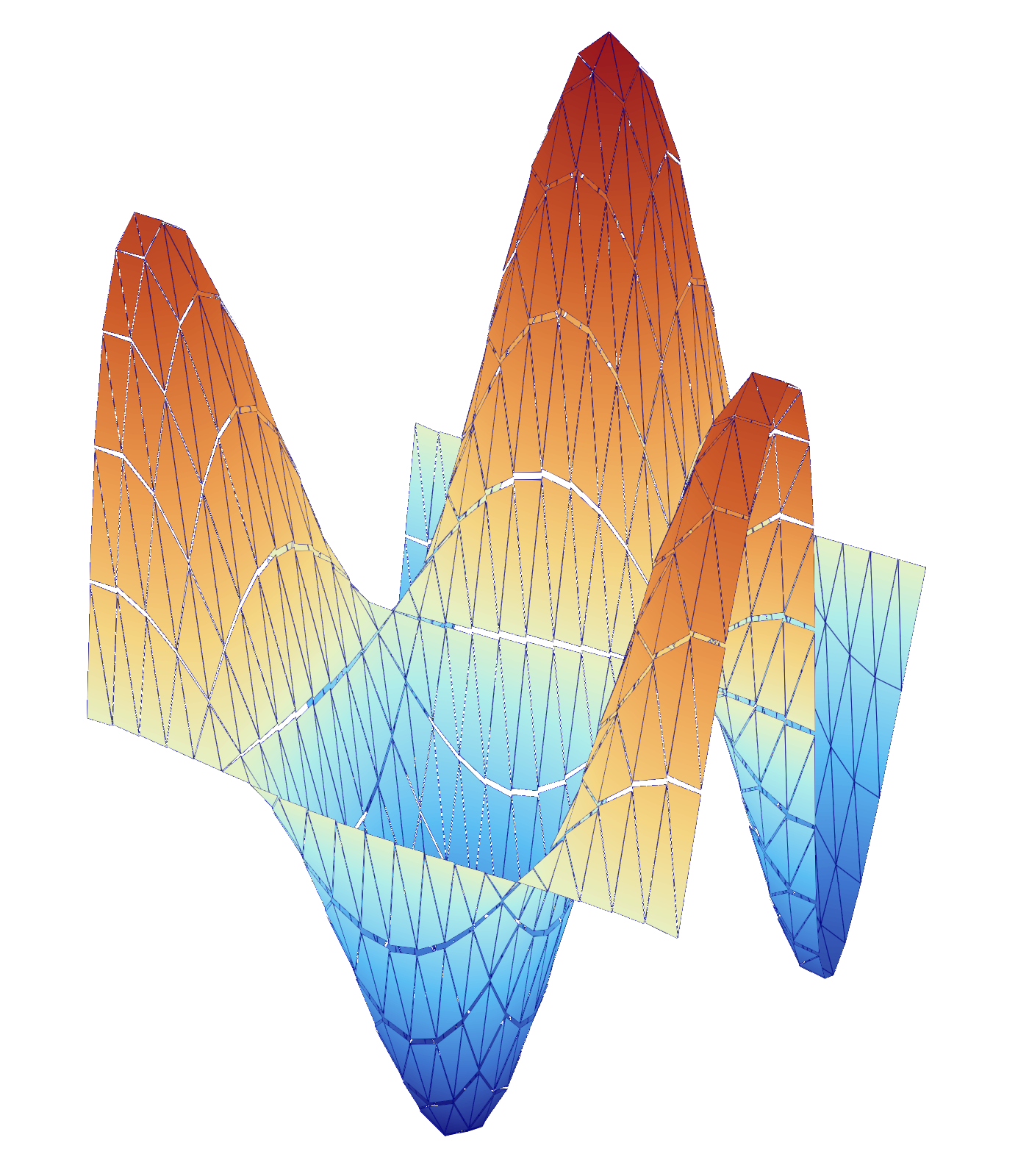}
    \includegraphics[width=0.25\linewidth]{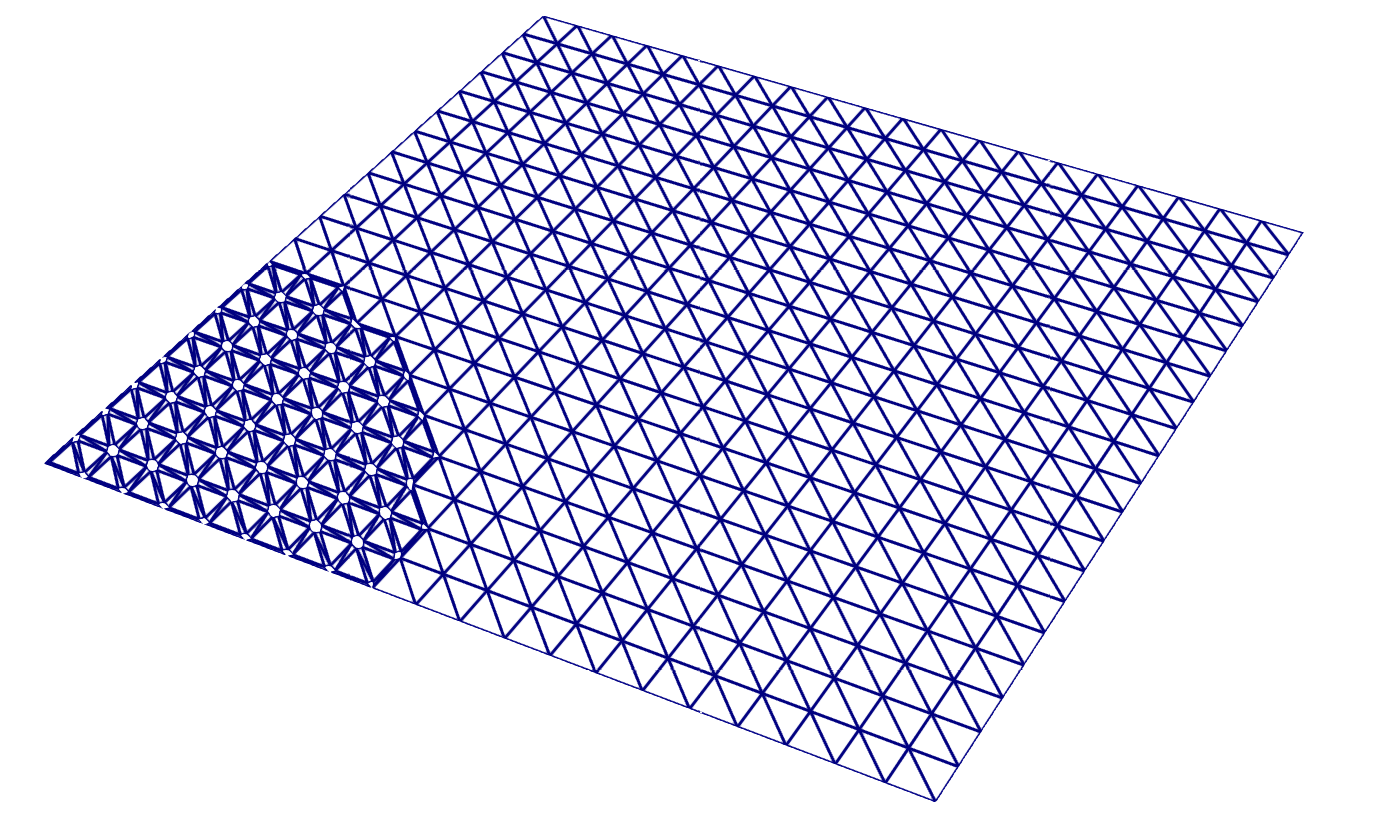}
    \includegraphics[width=0.25\linewidth]{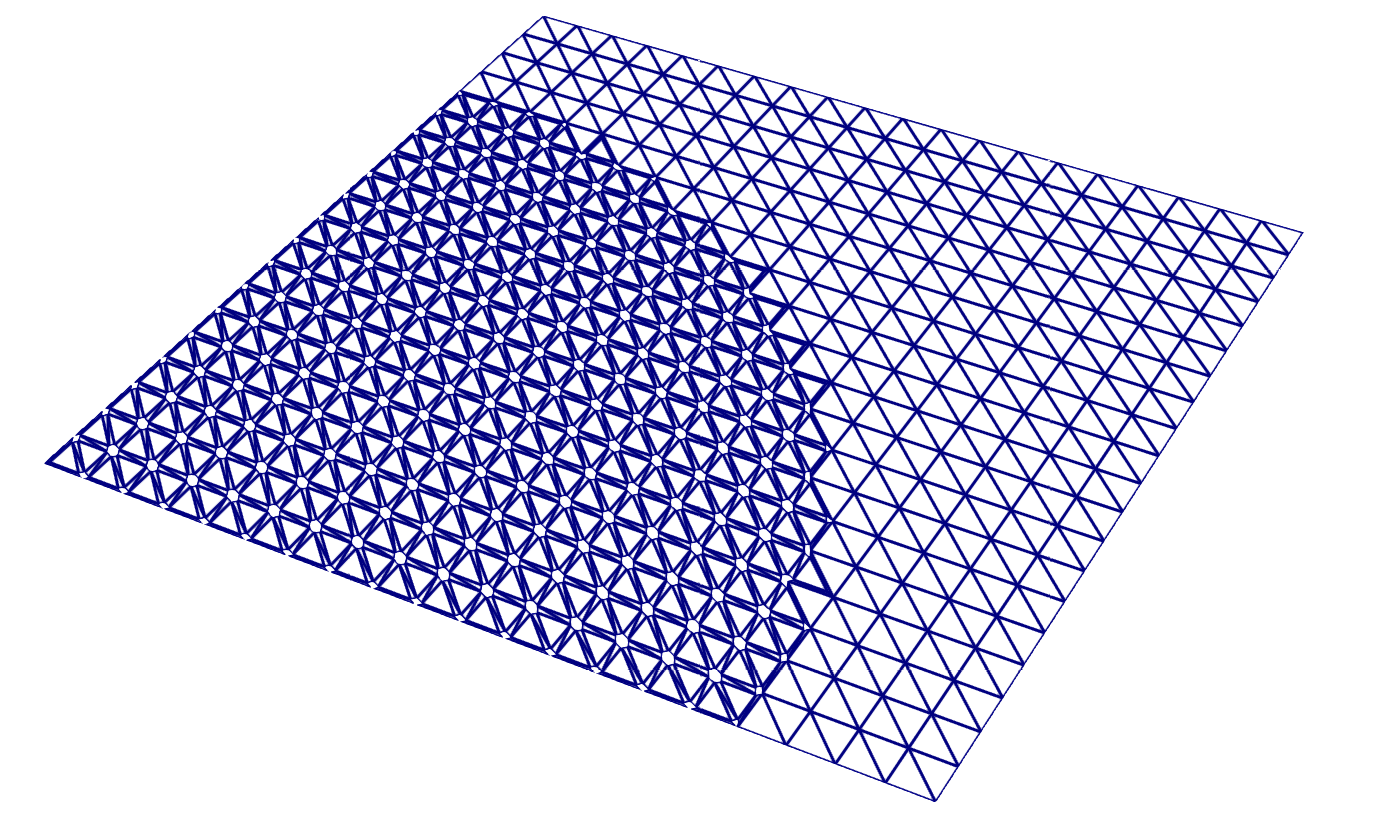}
    \includegraphics[width=0.25\linewidth]{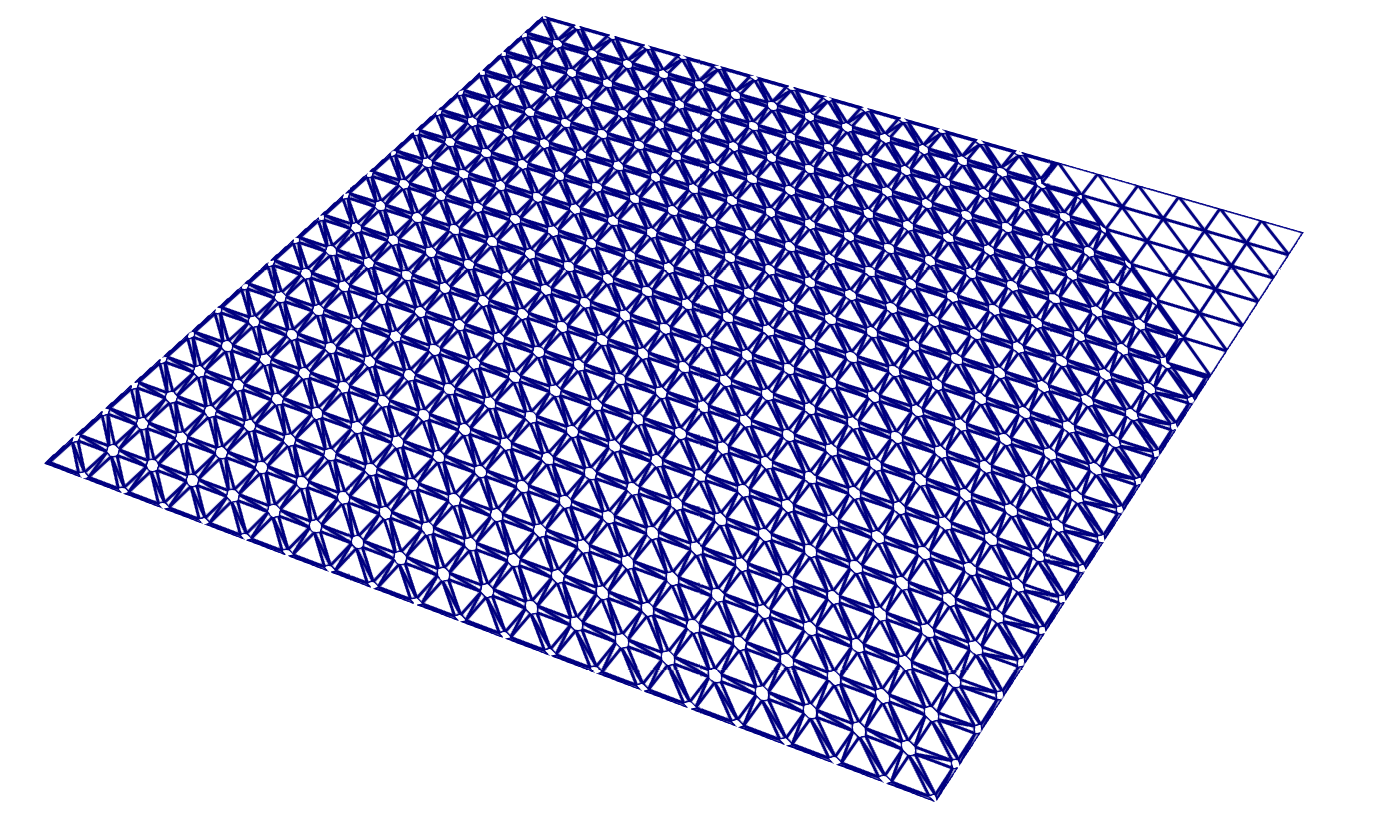}
    \caption{TFEM-DG solution on a series of meshes with a moving circular front which switches individual elements from FEM to DG by inserting (or not inserting) zero-measure edge elements. These dummy elements are depicted with nonzero thickness in the rendered meshes purely for visualization purposes. Also for visualization purposes, we choose a weak penalization coefficient $D = h^{-2}$, in order to see the discontinuity of the numerical solutions in regions where DG is `activated'.}
    \label{fig:2D_DG-FEM_front_propagation}
\end{figure}

\section{Conclusion}
In this work we have established a new connection between classical finite element methods (piecewise linear on simplicial meshes) and the discontinuous Galerkin method. We show that a DG formulation for Poisson's problem can be obtained as the limiting case of finite elements, where certain interface elements are collapsed to zero thickness. Namely, we modify the desired DG mesh by introducing `dummy' elements of width $\beta$ along cell interfaces, and let $\beta \to 0$. This degeneracy is compensated by  modifying the diffusion coefficient to be proportional to $\beta$ on the degenerate face elements. On this modified mesh, we apply standard continuous piecewise linear finite elements. If we let $\beta \to 0$ in 2D, we obtain in the limit the well known Babu\v{s}ka-Zl\'{a}mal DG scheme with trapezoidal quadrature on edges. 

The advantage of this approach is its trivial implementation. The FEM code is modified by introducing a simple threshold $J_{\min}$ for the zero Jacobians of the degenerate elements when computing element integrals, gradients of basis functions, etc., by mapping onto the reference element. The result is a formulation equivalent to the limiting Babu\v{s}ka-Zl\'{a}mal DG scheme. With this trivial modification to any standard FEM code, implementing DG now reduces to modifying the DG mesh by introducing the degenerate dummy element along cell interfaces. This is a simple meshing task, and can be easily implemented completely independently of the FEM code itself and of the problem being solved. Thresholding of the Jacobians is the strategy used in the tempered finite element method (TFEM) developed in \cite{quiriny2024temperedfiniteelementmethod}, which we indicate in the name TFEM-DG.

In the presented paper, we have provided a rigorous derivation of the resulting DG scheme via the limiting procedure. Then we have analyzed the limiting Babu\v{s}ka-Zl\'{a}mal DG scheme with trapezoidal quadrature on edges, proving optimal error estimates in the broken $H^1$-seminorm and the $L^2$-norm. The analysis gives a `recipe' for the value of the jump penalization parameter, or equivalently, for the threshold $J_{\min}$ used in the implementation. Finally, we test the method on Poisson problems with manufactured solutions in 2D and 3D, showing optimality of the theory with respect to $H^1$ and $L^2$ convergence rates and the choice of the parameter $J_{\min}$.

Altogether, the presented framework allows the easy implementation of DG in an off-the-shelf FEM code by making a simple tweak in the code, and modifying the computational mesh. The mesh modifications allows to easily switch individual elements (or edges) of the mesh to be either continuous FEM, or DG, according to the user's choice. We demonstrate this on a problem with a moving front which gradually switches FEM to DG. 

The presented framework can be easily extended to other problems. In a followup paper, we will demonstrate this for evolutionary nonlinear first order hyperbolic equations and their systems, such as the compressible Euler equations. There the degenerate edge elements naturally give rise to a numerical flux.

We believe the benefit of the presented approach is the possibility to easily switch between FEM and DG, even element-wise. This could be motivated by the problem, where in part of the computational domain we have, e.g., the heat equation or elasticity, while in another part we solve compressible Euler equations. While FEM is suitable for the first, DG is suitable for the latter, and TFEM-DG allows us to easily implement the solver in a monolithic fashion in a FEM code, without the need to implement a combined FEM-DG solver. Another motivation might be user-driven, when the user who has implemented finite elements for his/her specific PDE now wishes to test if discontinuous Galerkin would perform better. The TFEM-DG framework, allows to easily test DG within the original FEM code without having to re-implement the problem in a separate DG code.

By turning DG into a limit of FEM, TFEM‑DG reduces implementation to a mesh edit and a simple thresholding tweak, enabling practical and rigorous switching between FEM and DG wherever the physics or user demands. Discontinuity thus becomes a simple modeling choice rather than a software rewrite.


\bibliographystyle{plain}
\bibliography{bib}

\begin{thebibliography}{10}

\bibitem{arnold-et-al-unified-analysis-of-dg-for-eliptic-problems}
{D}ouglas {A}rnold, {F}ranco {B}rezzi, {B}ernardo {C}ockburn, and {L}. {M}arini.
\newblock {U}nified {A}nalysis of {D}iscontinuous {G}alerkin {M}ethods for {E}lliptic {P}roblems.
\newblock {\em SIAM J. Numer. Anal.}, 39:1749--1779, 07 2006.

\bibitem{Babuska:1973:FEM_penalty}
Ivo Babu{\v{s}}ka.
\newblock The finite element method with penalty.
\newblock {\em J. Math. Comput.}, 27(122):221--228, 1973.

\bibitem{Babuska-Zlamal}
{I}vo {B}abu\v{s}ka and Milo\v{s} Zl\'{a}mal.
\newblock Nonconforming elements in the finite element method with penalty.
\newblock {\em {SIAM} {J}. {N}umer. {A}nal.}, 10(5):863–875, 1973.

\bibitem{Barenblatt1962}
Grigory~I. Barenblatt.
\newblock The mathematical theory of equilibrium cracks in brittle fracture.
\newblock {\em Adv. Appl. Mech.}, 7:55--129, 1962.

\bibitem{WOPSIP}
Susanne Brenner, Luke Owens, Li-Yeng And, and Li-yeng Sung.
\newblock A weakly over-penalized symmetric interior penalty method.
\newblock {\em Electron. Trans. Numer. Anal.}, 30:107 -- 127, 01 2008.

\bibitem{ChenShuEntropyStableReview}
Tianheng Chen and Chi-Wang Shu.
\newblock Review of entropy stable discontinuous {G}alerkin methods for systems of conservation laws on unstructured simplex meshes.
\newblock {\em CSIAM Trans. Appl. Math.}, 1(1):1–52, Apr. 2020.

\bibitem{Ciarlet}
Philippe~G. Ciarlet.
\newblock {\em The Finite Element Method for Elliptic Problems}.
\newblock Society for Industrial and Applied Mathematics (SIAM), Philadelphia, 2002.

\bibitem{CockburnShu1998_V}
Bernardo Cockburn and Chi-Wang Shu.
\newblock The {R}unge--{K}utta discontinuous {G}alerkin method for conservation laws v: {M}ultidimensional systems.
\newblock {\em J. Comput. Phys.}, 141(2):199--224, 1998.

\bibitem{deFranciscoCarol2020}
Miguel de~Francisco and Ignacio Carol.
\newblock Displacement-based and hybrid formulations of zero-thickness mortar/interface elements for unmatched meshes, with application to fracture mechanics.
\newblock {\em Int. J. Numer. Methods Eng.}, 2020.

\bibitem{Dugdale1960}
Donald~S. Dugdale.
\newblock Yielding of steel sheets containing slits.
\newblock {\em J. Mech. Phys. Solids}, 8(2):100--104, 1960.

\bibitem{DurandTrinidade2021}
Rodrigo Durand and Felipe H. B.~Trinidade da~Silva.
\newblock Three-dimensional modeling of fracture in quasi-brittle materials using plasticity and cohesive finite elements.
\newblock {\em Int. J. Fract.}, 228:45--70, 2021.

\bibitem{GaroleraEtAl2013}
Daniel Garolera, Ignasi Aliguer, J.~M. Segura, Ignacio Carol, M.~R. Lakshmikantha, and J.~Alvarellos.
\newblock Zero-thickness interface elements with h--m coupling: formulation and applications in geomechanics.
\newblock In {\em XII International Conference on Computational Plasticity (COMPLAS XII)}, pages 1--12. CIMNE, 2013.

\bibitem{gmsh}
Christophe Geuzaine and Jean-François Remacle.
\newblock Gmsh: A 3-{D} finite element mesh generator with built-in pre- and post-processing facilities.
\newblock {\em Int. J. Numer. Methods Eng.}, 79:1309 -- 1331, 09 2009.

\bibitem{EsfahaniGajo2024}
Farzaneh Ghalamzan~Esfahani and Alessandro Gajo.
\newblock A zero-thickness interface element incorporating hydro--chemo--mechanical coupling and rate-dependency.
\newblock {\em Acta Geotech.}, 19:197--220, 2024.

\bibitem{pardiso}
{I}ntel {C}orporation.
\newblock {PARDISO} - {P}arallel {D}irect {S}parse {S}olver {I}nterface.
\newblock \url{https://www.intel.com/content/www/us/en/docs/onemkl/developer-reference-c/2023-0/pardiso.html}.

\bibitem{ParkPaulino2011}
Kyoungsoo Park and Glaucio~H. Paulino.
\newblock Cohesive zone models: A critical review of traction--separation relationships across fracture surfaces.
\newblock {\em Appl. Mech. Rev.}, 64(6):060802, 2011.

\bibitem{quiriny2024temperedfiniteelementmethod}
Antoine Quiriny, Václav Kučera, Jonathan Lambrechts, Nicolas Mo\"{e}s, and Jean-François Remacle.
\newblock The tempered finite element method.
\newblock {\em J. Comput. Phys.}, 549, 2026.

\bibitem{ReedHill1973}
William~H. Reed and Thomas~R. Hill.
\newblock Triangular mesh methods for the neutron transport equation.
\newblock Technical Report LA-UR-73-479, Los Alamos Scientific Laboratory, 1973.

\bibitem{Shu2013BriefSurvey}
Chi-Wang Shu.
\newblock A brief survey on discontinuous {G}alerkin methods in computational fluid dynamics.
\newblock {\em Adv. Mech.}, 43(6):541--553, 2013.

\end{thebibliography}

\end{document}